\theoremstyle{plain}
\newtheorem{thm}{Theorem}[section]
\newtheorem{corollary}[thm]{Corollary}
\newtheorem{definition}[thm]{Definition}
\newtheorem{example}[thm]{Example}
\newtheorem{lemma}[thm]{Lemma}
\newtheorem{proposition}[thm]{Proposition}
\newtheorem{remark}[thm]{Remark}
\newtheorem{theorem}[thm]{Theorem}
\numberwithin{equation}{section}
\newcommand{\N}{\mathbb{N}}
\newcommand{\Z}{\mathbb{Z}}
\newcommand{\R}{\mathbb{R}}
\newcommand{\Rn}{\mathbb{R}^n}
\newcommand{\Sn}{\mathbb{S}^{n-1}}
\DeclareMathOperator{\sop}{supp\,\!}
\DeclareMathOperator{\vol}{vol\,\!}
\DeclareMathOperator{\diam}{diam\,\!}
\DeclareMathOperator{\interior}{int\,\!}
\begin{document}

\title[Smooth convex extensions of convex functions]{Smooth convex extensions of convex functions}
\author{Daniel Azagra}
\address{ICMAT (CSIC-UAM-UC3-UCM), Departamento de An{\'a}lisis Matem{\'a}tico,
Facultad Ciencias Matem{\'a}ticas, Universidad Complutense, 28040, Madrid, Spain }
\email{azagra@mat.ucm.es}

\author{Carlos Mudarra}
\address{ICMAT (CSIC-UAM-UC3-UCM), Calle Nicol\'as Cabrera 13-15.
28049 Madrid SPAIN}
\email{carlos.mudarra@icmat.es}

\date{April 22, 2016}

\keywords{extension of functions, convex function, differentiable function}

\thanks{C. Mudarra was supported by Programa Internacional de Doctorado de la Fundaci\'on La Caixa--Severo Ochoa. Both authors were partially supported by Grant MTM2012-3431}

\subjclass[2010]{54C20, 52A41, 26B05, 53A99, 53C45, 52A20, 58C25, 35J96}

\begin{abstract}
Let $C$ be a compact convex subset of $\R^n$, $f:C\to\R$ be a convex function, and $m\in\{1, 2, \ldots, \infty\}$. Assume that, along with $f$, we are given a family of polynomials satisfying Whitney's extension condition for $C^m$, and thus that there exists $F\in C^{m}(\R^n)$ such that $F=f$ on $C$. It is natural to ask for further (necessary and sufficient) conditions on this family of polynomials which ensure that $F$ can be taken to be convex as well. We give a satisfactory solution to this problem in the case $m=\infty$, and make some remarks about the case of finite $m\geq 2$. For a solution to a similar problem in the case $m=1$ (even for $C$ not necessarily convex), see \cite{AM, ALM, AM2}.
\end{abstract}

\maketitle

\section{Introduction and main results}

Let $C$ be a closed subset of $\R^n$, and $m\in\N$. The famous Whitney Extension Theorem \cite{Whitney} provides a necessary and sufficient condition $(W^m)$ for a function $f: C\to\R$ and a family of polynomials $P_y:\R^n\to\R$, of degree$(P_y)\leq m$ and such $P_y(y)=f(y)$ for every $y\in C$, to admit a $C^m$ extension $F$ to all of $\R^n$ such that $J^{m}_yF=P_{y}$ for each $y\in C$, where $J_y^{m}F$ denotes the Taylor polynomial of order $m$ of $F$ at $y$. Whitney's condition $(W^m)$ can be reformulated by saying that
\begin{equation}\label{Whitney condition with polynomials}
\lim_{\delta\to 0^{+}}\rho_m(K,\delta)=0 \textrm{ for each compact subset } K \textrm{ of } C,
\end{equation}
where we denote
$$
\rho_m(K, \delta)=\sup \left\lbrace \frac{\|D^{j}P_y(z)-D^{j}P_z(z)\|}{|y-z|^{m-j}} \, : \, j=0,\ldots, m, \:  y,z\in K, \: 0<|y-z|\leq \delta \right\rbrace.
$$
If this condition is met, then Whitney's theorem provides us with a function $F\in C^{m}(\R^n)$ such that $D^{j}F(y)=D^{j}P_{y}(y)$ for every $j=0,\ldots,m$ and $y\in C$; see \cite[Theorem 3.1.14, p. 225]{Federer} for instance. The converse is trivially true.

In the case $m=\infty$, Whitney's theorem states that if we are given a family of polynomials $\{P^{m}_{y}\}_{y\in C, m\in \N\cup\{0\}}$ such that $P^{m}_y(y)=f(y)$ and for every $k>j$ the polynomial $P^{j}_y$ is the Taylor polynomial of order $j$ at $y$ of the polynomial $P^{k}_y$ (let us call such a family {\em a compatible family of polynomials for $C^{\infty}$ extension} of a function $f$ defined on $C$), and if for each $m\in\N$ the subfamily
$\{P^{m}_{y}\}_{y\in C}$ satisfies Whitney's condition \eqref{Whitney condition with polynomials}, then there is a function $F\in C^{\infty}(\R^n)$ such that $P^{m}_y=J^{m}_y F$, for every $y\in C$ and $m\in\N$. Again, the converse is obviously true.

In recent years there has been great interest in solving Whitney-type extension problems for functions rather than jets, in constructing continuous linear extension operators with nearly optimal norms, and in extending these results to other spaces of functions such as Sobolev spaces, see \cite{Glaeser, BrudnyiShvartsman, BierstoneMilmanPawluka1, FeffermanAnnals2005, FeffermanAnnals2006, BierstoneMilmanPawluka2, FeffermanBullAMS2009, ShvartsmanAdvances2009, Israel, FeffermanIsraelLuli, ShvartsmanAdvances2014, BrudnyiBrudnyi} and the references therein.

Returning to Whitney's theorem, it is natural to wonder what further conditions (if any) on those families of polynomials would be necessary and sufficient to ensure that $F$ can be taken to be convex whenever $f$ is convex. Besides its basic character, one should expect that a solution to this problem would find interesting applications in problems of differential geometry (see \cite{GhomiJDG2001} and the references therein, and also \cite[Theorem 1.8]{AM}), and of partial differential equations (such as the Monge-Amp\`ere equations).

Let us begin by making a couple of general observations concerning solvability of our extension problem. 

Firstly, if $C$ is not assumed to be compact, it is known that our problem has a negative solution. Indeed, there exists an unbounded closed convex subset $C$ of $\R^2$ and a $C^{\infty}$ convex function $f:C\to\R$ which has no {\em continuous} convex extension to all of $\R^2$, see \cite[Example 4]{SchulzSchwartz}. A modification of this example, which we will present in Section \ref{sectioncounterexamples} below, shows that the obstruction persists even if we require that $f$ have a strictly positive Hessian on a neighbourhood of $C$ (such strongly convex functions $f$ have smooth convex extensions to small open neighborhoods of $C$, but no convex extensions to $\R^n$). See also \cite{BorweinMontesinosVanderwerff, VeselyZajicek}, which show that there are infinite-dimensional Banach spaces $X$, closed {\em subspaces} $E\subset X$ and continuous convex functions $f:E\to\R$ which have no continuous convex extensions to $X$.

Secondly, if we do not require that $C$ be convex, then the problem gets geometrically complicated, for the following reason. There are several possible, nonequivalent, definitions of convex functions defined on non-convex domains (see \cite{MinYan} for a study of three of them) but, no matter how one defines convexity of such functions, the problem cannot be solved just by adding further analytical conditions on the putative Taylor polynomials of $f$ and disregarding the global geometry of the graph of $f$. 

To see why this is so, let us consider the following example: take any four numbers $a, b, c, d\in\R$ with $a<b<0<c<d$, and define $C=\{a, b, 0, c, d\}$ and $f(x)=|x|$ for $x\in C$.
Since $C$ is a five-point set it is clear that, no matter what polynomials of degree up to $k\geq 1$ are chosen to be the differential data of $f$ on $C$, the function $f$ will satisfy Whitney's extension condition $(W^k)$ for every $k\in\N$. Hence there are many $C^{1}$ (even infinitely many $C^{\infty}$) functions $F$ with $F=f$ on $C$. But none of these $F$ can be convex on $\R$, because, as is easily checked, any convex extension $g$ of $f$ to $\R$ must satisfy $g(x)=|x|$ for every $x\in [a, d]$, and therefore $g$ cannot be differentiable at $0$.
This example also shows that the most general forms of the extension problem for smooth convex functions are different in nature from the classical Whitney extension theorem \cite{Whitney} (for jets) and from the Whitney extension problems (for functions) dealt with in the mentioned papers \cite{Glaeser, BrudnyiShvartsman, BierstoneMilmanPawluka1, FeffermanAnnals2005, FeffermanAnnals2006, BierstoneMilmanPawluka2, FeffermanBullAMS2009, ShvartsmanAdvances2009, Israel, FeffermanIsraelLuli, ShvartsmanAdvances2014}, which are all of a local character (meaning that if one can obtain local solutions to the problem in question, with a uniform control of the modulus of continuity of the derivatives, then one can also obtain global solutions).

Fortunately, there is evidence that the geometrical obstructions present in these examples no longer exist when $C$ is assumed to be compact and convex. In particular, it is clear that if $f$ is convex on a compact convex set $C$ and is $C^{1}$ on a neighbourhood of $C$ then 
\begin{equation}\label{definition of the minimal extension}
m(f)(x):=\max_{y\in C}\{f(y)+ \langle \nabla f(y), x-y\rangle\}
\end{equation} defines a Lipschitz, convex function on all of $\R^n$ which coincides with $f$ on $C$ (and that, in the case when $C$ has nonempty interior, $m(f)$ happens to be the minimal convex extension of $f$ to $\R^n$).

Therefore, at least in a first approach to the problem, it seems reasonable to assume that $C$ is convex and compact, which we will do in the rest of this paper\footnote{Nonetheless, in the special case $m=1$, even for not necessarily convex and compact $C$, we have found in \cite{AM, AM2} global geometrical conditions which, along with $(W^1)$, are necessary and sufficient for the existence of convex functions $F\in C^{1}(\R^n)$ such that $F=f$ on $C$.}, and ask ourselves if our extension problem can always be solved in this relatively simple case.
Extension problems related to the one we are dealing with have been considered by M. Ghomi \cite{GhomiPAMS2002} and by M. Yan \cite{MinYan}. A consequence of their results is that, under the assumptions that $m\geq 2$ and that $f$ has a strictly positive Hessian on the boundary $\partial C$, there always exists an $F\in C^{m}(\R^n)$ such that $F$ is convex and $F=f$ on $C$. See also \cite{GhomiJDG2001, GhomiBLMS2004, BucicovschiLebl} for related problems. 

Of course, strict positiveness of the Hessian is a very strong condition which is far from being necessary, and it would be desirable to get rid of this requirement altogether, if possible. However, some other assumptions must be made in its place, at least when $m\geq 3$, as already in one dimension there are examples of $C^{3}$ convex functions $g$ defined on compact intervals $I$ which cannot be extended to $C^3(J)$ convex functions for any open interval $J$ containing $I$. Such an example is $g(x)=x^{2}-x^{3}$ defined for $x\in I:=[0, \frac{1}{3}]$. This example obviously generalizes to arbitrary dimension $n$ by considering for instance
\begin{equation}\label{positive Hessian does not imply CWm}
f(x_1, \ldots, x_n)=x_{1}^{2}+\cdots +x_n^2 - x_{1}^{3}, \,\,\, (x_1, \ldots, x_n)\in B(0, 1/3).
\end{equation}
In particular these examples show that the condition $D^{2}f\geq 0$ on $C$ is not sufficient to ensure the existence of a convex function $F\in C^{m}(\R^n)$ such that $F=f$ on $C$ for any $m\geq 3$. Therefore, we should look for conditions on the derivatives of $f$ on $C$ (beyond $D^{2}f\geq 0$ on $C$) that are necessary and sufficient to guarantee that $f$ has a $C^m$ convex extension $F$ to all of $\R^n$.

Now, observe that any such function $F$ will satisfy that $D^{2}F(x)(v^2)\geq 0$ for every $x\in\R^n$, $v\in \mathbb{S}^{n-1}$, and therefore, if $m\geq 2$ is finite, the Taylor polynomial of the second derivative $D^{2}F$ at points $y\in C$ will also satisfy
\begin{align*}
 0\leq &D^2F(y+tw)(v^2) = \\
 & D^2F(y)(v^2)+ t\: D^3F(y)(w,v^2)+ \cdots + \frac{t^{m-2}}{(m-2)!} D^mF(y)(w^{m-2},v^2) +R_{m}(t, y, v, w),
\end{align*}
where
$$
\lim_{t\to 0^{+}}\frac{R_{m}(t,y,v,w)}{t^{m-2}}=0 \quad \textrm{uniformly on} \quad y\in C, w, v\in \mathbb{S}^{n-1}.
$$
Then we will also have
$$
\liminf_{t\to 0^{+}}\frac{1}{t^{m-2}}\left(D^2F(y)(v^2)+ \cdots + \frac{t^{m-2}}{(m-2)!} D^mF(y)(w^{m-2},v^2) \right)\geq 0
$$
uniformly on $y\in C, w, v\in \mathbb{S}^{n-1}$. This of course means that for every $\varepsilon>0$ there exists $t_{\varepsilon}>0$ such that
$$
D^2F(y)(v^2)+ t\: D^3F(y)(w,v^2)+ \cdots + \frac{t^{m-2}}{(m-2)!} D^mF(y)(w^{m-2},v^2) \geq -\varepsilon t^{m-2}
$$
for all $y\in C$, $v, w\in\mathbb{S}^{n-1}$, $0<t\leq t_{\varepsilon}$.
We will abbreviate this by saying that
$$
F \textrm{ satisfies condition } (CW^m) \textrm{ on } C.
$$
Therefore we obtain the following necessary condition for the solution of the convex $C^{m}$ extension problem.

\begin{definition}
Let $m\in\N$, $m\geq 2$. We will say that $f$, together with a family of polynomials $\{P^{m}_y\}_{y\in C}$ of degree up to $m$ such that $P^{m}_y(y)=f(y)$, satisfy the condition $(CW^m)$ provided that for every $\varepsilon>0$ there exists $t_{\varepsilon}>0$ such that
$$
D^2P^{m}_{y}(y)(v^2)+ t\: D^3P^{m}_{y}(y)(w,v^2)+ \cdots + \frac{t^{m-2}}{(m-2)!} D^{m}P^{m}_{y}(y)(w^{m-2},v^2) \geq -\varepsilon t^{m-2}
$$
for all $y\in C$, $v, w\in\mathbb{S}^{n-1}$, $0<t\leq t_{\varepsilon}$. 

We will also say that $f$ and $\{P^{m}_y\}_{y\in C}$ {\em satisfy $(CW^{m})$ with a strict inequality} if there are some $\eta>0$ and $t_0>0$ such that
$$
D^2P^{m}_{y}(y)(v^2)+ t\: D^3P^{m}_{y}(y)(w,v^2)+ \cdots + \frac{t^{m-2}}{(m-2)!} D^mP^{m}_{y}(y)(w^{m-2},v^2) \geq \eta t^{m-2}
$$
for all $y\in  C$, $v, w\in\mathbb{S}^{n-1}$, $0<t\leq t_{0}$.
\end{definition}

In the case that $C$ has nonempty interior, the polynomials $P_{y}^{m}$ are uniquely determined (even at the boundary points of $C$) by the values of $f$ on $C$, and the above condition may be reformulated as follows
$$
\liminf_{t\to 0^{+}}\frac{1}{t^{m-2}}\left(D^2f(y)(v^2)+ \cdots + \frac{t^{m-2}}{(m-2)!} D^mf(y)(w^{m-2},v^2) \right)\geq 0 \eqno(CW^{m})
$$
uniformly on $y\in C, w, v\in \mathbb{S}^{n-1}$, understanding that $D^{j}f$ denotes the derivative of order $j$ at $y$ of any $C^{m}$ extension of $f$ to $\R^n$.

One might then think that for our convex extension problem, by considering the relative interior of the convex compact set $C$, there would be no loss of generality in assuming that $C$ has nonempty interior (and therefore considering that $(CW^{m})$ holds only for $v,w$ in the linear span of the directions $y-y'$ with $y, y'\in C$). However, since we are looking for convex analogues of the classical Whitney's extension theorem (which deals with prescribing differential data as well as extending functions) such an approach would make us lose some valuable insight about the question as to what extent one can prescribe values and derivatives of convex functions on a given compact convex set with empty interior. Indeed, for a convex compact set $C$ with empty interior and a convex function $f:C\to\mathbb{R}$, there are infinitely many convex functions $F:C\to\mathbb{R}$ with very different derivatives on $C$ and such that $F=f$ on $C$. 

Let us look, for instance, at the extreme situation in which $C$ is a singleton, say $C=\{0\}$. One of our main results in this paper (see Theorem \ref{maintheorem Cinfty} below) implies that, for any given family of polynomials $P^{m}$ of degree up to $m$, $m\in\N$, such that $J^{k}_{0}P^{m}=P^{k}$ whenever $k\leq m$ and, for each $m\geq 2$,
$$
\liminf_{t\to 0^{+}}\frac{D^2P^{m}(0)(v^2)+  \cdots + \frac{t^{m-2}}{(m-2)!} D^{m}P^{m}(0)(w^{m-2},v^2)}{t^{m-2}} \geq 0
$$
uniformly on $|v|=|w|=1$, there exists a convex function $F$ of class $C^{\infty}$ such that the Taylor polynomial of $F$ at $0$ is $P^{m}$. Consequently, there are infinitely many degrees of freedom in prescribing derivatives of convex functions at a given point.

On the other hand, if $C$ is a convex compact set with nonempty interior (what is usually called a convex body) and $f:C\to\R$ is a convex function which has a (not necessarily convex) $C^m$ extension to an open neighbourhood of $C$, then it is clear that $f$ automatically satisfies $D^2 f(x)\geq 0$ for every $x\in \interior(C)$.  Conversely, if $f$ satisfies $D^2f(x)\geq 0$ for all $x$ in the open convex set $\textrm{int}(C)$, then $f$ is convex on $\textrm{int}(C)$, and by continuity we infer that $f$ is also convex on $C$. These observations show that if $C$ is a convex compact subset of $\R^n$ with nonempty interior, $m\in\N, m\geq 2$, and $\lbrace P_y^m \rbrace_{y\in C}$ is a family of polynomials of degree up to $m,$ then a necessary condition for the existence of a convex function $F$ of class $C^m(\R^n)$ with $J_y^m F = P_y^m$ for every $y \in C$ is that
$$
\lbrace P_y^m \rbrace_{y\in C} \textrm{ satisfies } (W^m) \textrm{ on } C \textrm{ and }(CW^m) \textrm{ on } \partial C, \textrm{ and } D^2 P^m_y(y)(v^2) \geq 0 \textrm{ for all } y \in \interior(C), v\in \Sn;
$$ 
or equivalently that
$$
\lbrace P_y^m \rbrace_{y\in C} \textrm{ satisfies } (W^m) \textrm{ on } C \textrm{ and } (CW^m) \textrm{ on } \partial C, \textrm{ and the function } C \ni y \mapsto P^m_y(y) \textrm{ is convex.}
$$
It is also easy to show the following.
\begin{remark}
{\em
Let $f$ and $\{P^{m+1}_y\}_{y\in C}$ satisfy $(W^{m+1})$ and $(CW^{m+1})$ for some $m\geq 2$. Then $f$ and $\{P^{m}_y\}_{y\in C}$ satisfy $(CW^m)$ too, where each $P^{m}_y$ is obtained from  $P^{m+1}_y$ by discarding its $(m+1)$-homogeneous terms.}
\end{remark}

Our first main result is as follows.

\begin{theorem}\label{maintheorem Cinfty}
Let $C$ be a compact convex subset of $\R^n$. Let $f:C\to\R$ be a function, and let $\{P^{m}_{y}\}_{y\in C, m\in \N}$ be a compatible family of polynomials for $C^{\infty}$ extension of $f$. Then $f$ has a convex, $C^{\infty}$  extension $F$ to all of $\R^n$, with $J^{m}_{y}F=P^{m}_{y}$ for every $y\in C$ and $m\in\N$, if and only if $\{P^{m}_{y}\}_{y\in C}$ satisfies $(W^{m})$ and $(CW^{m})$ on $C$, for every $m\in \N$, $m\geq 2$.

Moreover, if $C$ has nonempty interior and $f:C\to\R$ is convex, then $f$ has a convex, $C^{\infty}$  extension $F$ to all of $\R^n$, with $J^{m}_{y}F=P^{m}_{y}$ for every $y\in C$ and $m\in\N$, if and only if $\{P^{m}_{y}\}_{y\in C}$ satisfies $(W^{m})$ on $C$, and $(CW^{m})$ on $\partial C$, for every $m\in \N$, $m\geq 2$.
\end{theorem}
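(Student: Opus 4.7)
Necessity was established in the discussion preceding the theorem, so I focus on sufficiency. My plan has three ingredients: a non-convex $C^\infty$ extension via Whitney, a local strict convexification via a $\delta$-perturbation, and a global convex smooth extension together with the removal of this perturbation in the limit.

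By Whitney's classical $C^\infty$ extension theorem applied to the compatible family $\{P^m_y\}_{y\in C,\, m\in\N}$, there exists $G \in C^\infty(\R^n)$ with $J^m_y G = P^m_y$ for every $y\in C$ and $m\in\N$; in general $G$ is not convex. For $\delta > 0$, set $G_\delta := G + \delta|\cdot|^2$; its jets $\tilde P^m_y := P^m_y + \delta|\cdot|^2$ satisfy $(CW^m)$ strictly with a constant comparable to $\delta$, since $D^k(|\cdot|^2) \equiv 0$ for $k\geq 3$ gives
\begin{align*}
\sum_{k=0}^{m-2}\tfrac{t^k}{k!}\, D^{k+2}\tilde P^m_y(y)(w^k, v^2)
&= 2\delta + \sum_{k=0}^{m-2}\tfrac{t^k}{k!}\, D^{k+2}P^m_y(y)(w^k, v^2) \\
&\geq 2\delta - \varepsilon\, t^{m-2},
\end{align*}
which is positive for $t$ small, by the original $(CW^m)$. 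In particular $D^2 G_\delta \geq 2\delta I$ on $C$, and continuity plus compactness yield that $G_\delta$ is $\delta$-strongly convex on some open neighbourhood $U_\delta$ of $C$.

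To extend $G_\delta|_{U_\delta}$ to a globally convex smooth function, choose a compact convex neighbourhood $V \Subset U_\delta$ of $C$ and define
$$
H_\delta(x) := \sup_{y \in V}\bigl\{G_\delta(y) + \langle\nabla G_\delta(y), x-y\rangle\bigr\},
$$
a Lipschitz convex function on $\R^n$ that equals $G_\delta$ on $V$. A convexity-preserving smoothing of $H_\delta$ outside a slightly smaller neighbourhood $V' \Subset V$ of $C$ — for instance, by gluing the $C^\infty$ function $G_\delta$ on $V'$ to a mollification of $H_\delta$ outside $V$ via a smoothed-max construction tuned so as to preserve convexity — yields $F_\delta \in C^\infty(\R^n)$, convex and equal to $G_\delta$ on a neighbourhood of $C$, so that $J^m_y F_\delta = \tilde P^m_y$ for every $y\in C, m\in\N$. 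To cancel the $\delta|\cdot|^2$ correction, carry out the construction along a sequence $\delta_k \downarrow 0$ in a coordinated way, so that the resulting $F_{\delta_k}$ form a Cauchy sequence in $C^\infty$ on each compact subset of $\R^n$; the limit $F$ is then $C^\infty$, convex as a limit of convex functions, and satisfies $J^m_y F = \lim_k (P^m_y + \delta_k|\cdot|^2) = P^m_y$ on $C$.

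The main technical obstacle is the convexity-preserving smoothing: regularising the Lipschitz convex $H_\delta$ into a $C^\infty$ convex function without disturbing $G_\delta$ on a neighbourhood of $C$ requires a delicate smoothed-max patching, broadly in the spirit of the techniques of \cite{GhomiPAMS2002}; the infinite-order flatness provided by $(CW^m)$ for every $m$ is crucial, since it guarantees that any $C^\infty$ modification supported off $C$ will not perturb the prescribed jets. For the second statement of the theorem, if $f$ is convex on a convex body $C$, then for $y \in \mathrm{int}(C)$ the polynomials $P^m_y$ are the actual Taylor polynomials of $f$, and convexity together with continuity of the derivatives forces $(CW^m)$ on $\mathrm{int}(C)$ automatically; so only $(CW^m)$ on $\partial C$ need be imposed, and the first statement then applies.
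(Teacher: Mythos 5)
Your reduction steps are fine as far as they go: Whitney gives a (non-convex) $C^\infty$ extension $G$ with the prescribed jets, $(CW^m)$ forces $D^2G\geq 0$ on $C$, so $G_\delta=G+\delta|\cdot|^2$ is strongly convex on some neighbourhood $U_\delta$ of $C$, and $H_\delta$ is a global Lipschitz convex function agreeing with $G_\delta$ on $V$. The proof breaks at the two steps you leave as ``technical'': the convexity-preserving gluing and the limit $\delta_k\downarrow 0$. For the gluing, note that since $G_\delta$ is convex on the convex set $U_\delta$ and $H_\delta$ is the supremum of tangent planes of $G_\delta$ taken at points of $V\subset U_\delta$, one has $H_\delta\leq G_\delta$ on all of $U_\delta$, with equality on $V$. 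Hence any smoothed-max of $G_\delta$ with $H_\delta-c$ (or with a mollification of $H_\delta$, which is uniformly close to it) necessarily coincides with $G_\delta$ throughout $U_\delta$: the transition region is pushed outside $U_\delta$, where $G_\delta$ is no longer convex and where there is no comparison between $G_\delta$ and $H_\delta$ (think of a large bump of $G$ just beyond $U_\delta$). A partition-of-unity gluing does not preserve convexity either. So the construction of a convex $F_\delta\in C^\infty(\R^n)$ equal to $G_\delta$ near $C$ is not achieved by what you describe; producing such an $F_\delta$ while keeping all the jets on $C$ is essentially the whole problem, and the paper solves it not by gluing but by adding to $f$ a convex $C^\infty$ function $\varphi\geq 0$ with $\varphi^{-1}(0)=C$, all of whose jets vanish on $C$, and whose Hessian outside $C$ is bounded below quantitatively (Ghomi-type integral of $g(\langle x,w\rangle-h(w))$ over $\mathbb{S}^{n-1}$).

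The second gap is the limit. You give no mechanism for the $F_{\delta_k}$ to be Cauchy in $C^\infty$ (or even $C^2$) on compacta. The neighbourhoods $U_\delta$ must in general shrink to $C$ as $\delta\to 0$, because $G$ itself need not be convex on any neighbourhood of $C$; consequently the transition regions of your construction collapse and their higher derivatives are not controlled, so ``coordinating'' the constructions is precisely where the real work lies. This is exactly the quantitative content your proposal omits and the paper supplies: from $(CW^m)$ for every $m$ one gets $D^2f(x)(v^2)\geq -d(x,C)^{m-2}$ for $d(x,C)\leq r_m$ (Lemma \ref{inequalities rm}), and the whole machinery of the Whitney partition of unity on $(0,\infty)$, the sequence $\{\delta_p\}$ and the function $\varepsilon$ (flat to infinite order at $0$ yet, after two integrations and division by $t^{n+3}$, still large enough at scale $d(x,C)$) is designed so that a single correction $a\varphi$ simultaneously preserves every jet on $C$ and outweighs the lack of convexity of $f$ at all scales, with no limiting procedure needed. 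Without an argument of this kind, neither the convexity nor the $C^\infty$ regularity of your limit $F$, nor the identity $J^m_yF=P^m_y$, is established.
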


If $m\geq 2$ finite and $n\geq 2$, if $C$ has empty interior then conditions $(CW^m)$ and $(W^m)$ are not sufficient for a convex function $f:C\to\R$ to have a $C^m$ convex extension to $\R^n$; see Example \ref{CWm with m finite is not sufficient when intC is empty} in Section \ref{sectioncounterexamples} below. However, it is conceivable that these conditions might be sufficient in the case that $C$ has nonempty interior. As of now, we only know that in dimension $n=1$ this is indeed so, and moreover, since in this case the boundary of $C$ has only two points and there are only two directions in which to differentiate, the condition $(CW^m)$ can be very much simplified.

\begin{proposition}
Let $I$ be a closed interval in $\R$, and $m\in\N$ with $m\geq 2$. Let $f:I\to\R$ be a convex function of class $C^{m}$ in the interior of $I$, and assume that $f$ has one-sided  derivatives of order up to $m$, denoted by $f^{(k)}(a^{+})$ or $f^{(k)}(b^{-})$, at the extreme points of $I$. Then $f$ has a convex extension of class $C^{m}(\R)$ if and only if the first (if any) non-zero derivative  which occurs in the finite sequence $\{f^{(2)}(b^{-}), f^{(3)}(b^{-}), \ldots, f^{(m)}(b^{-})\}$ is positive and of even order, and similarly for $\{f^{(2)}(a^{+}), f^{(3)}(a^{+}), \ldots, f^{(m)}(a^{+})\}$.
\end{proposition}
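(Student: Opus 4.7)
The plan is to handle necessity by examining the Taylor expansion of $F''$ at $b$ from both sides, and to handle sufficiency by constructing convex $C^m$ extensions on $(-\infty,a]$ and $[b,\infty)$ separately and glueing them to $f$. For necessity, suppose $F \in C^m(\R)$ is convex with $F|_I = f$. Then $F^{(k)}(b) = f^{(k)}(b^-)$ for $0 \leq k \leq m$. Let $j \in \{2,\dots,m\}$ be the smallest index with $c := F^{(j)}(b) \neq 0$ (if none exists, the condition at $b$ is vacuous). Taylor expansion of $F''$ around $b$ gives, as $s \to 0^+$,
\[
F''(b+s) = \tfrac{c}{(j-2)!}\,s^{j-2} + o(s^{j-2}), \qquad F''(b-s) = \tfrac{(-1)^{j-2}c}{(j-2)!}\,s^{j-2} + o(s^{j-2}).
\]
Convexity forces $F'' \geq 0$, so $c \geq 0$ and $(-1)^j c \geq 0$; since $c \neq 0$, we conclude $c > 0$ and $j$ even. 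The argument at $a$ is identical.

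For sufficiency, set $T_b(x) = \sum_{k=0}^m \frac{f^{(k)}(b^-)}{k!}(x-b)^k$ and try
\[
g_+(x) := T_b(x) + A(x-b)^{m+1}, \quad x \geq b,
\]
for a constant $A \geq 0$ to be chosen. Since $(x-b)^{m+1}$ vanishes to order $m+1$ at $b$, one has $g_+^{(k)}(b) = f^{(k)}(b^-)$ for $0 \leq k \leq m$. Build $g_-$ on $(-\infty,a]$ analogously from $T_a$ and $A'(a-x)^{m+1}$. Defining $F$ to equal $g_-$, $f$, $g_+$ on the three pieces, $F \in C^m(\R)$ because derivatives of order $\leq m$ match at $a$ and $b$; and if each piece is convex, then $F' \in C^{m-1}(\R)$ is non-decreasing on each piece and continuous on $\R$, hence non-decreasing on all of $\R$, so $F$ is convex.

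The key step, and the main obstacle, is choosing $A$ so that
\[
g_+''(x) = T_b''(x) + A(m+1)m\,(x-b)^{m-1} \geq 0 \quad\text{for every } x \geq b.
\]
If $f^{(k)}(b^-) = 0$ for all $k = 2,\dots,m$, then $T_b'' \equiv 0$ and any $A \geq 0$ works. Otherwise, by hypothesis the smallest $j \leq m$ with $c_j := f^{(j)}(b^-) \neq 0$ satisfies $c_j > 0$ and $j$ even; hence near $b$
\[
T_b''(x) = \tfrac{c_j}{(j-2)!}(x-b)^{j-2} + O((x-b)^{j-1}),
\]
which is $\geq 0$ on some $[b,b+\delta]$ because $j-2$ is even. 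On $[b+\delta,\infty)$ the polynomial $T_b''$ has degree at most $m-2$, so the continuous function $-T_b''(x)/[(m+1)m(x-b)^{m-1}]$ is bounded; taking $A$ larger than its supremum yields $g_+'' \geq 0$ on all of $[b,\infty)$. Symmetric work on $(-\infty,a]$ completes the construction.
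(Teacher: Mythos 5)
Your proof is correct, and since the paper offers no argument for this proposition (it is explicitly ``left to the reader's care''), there is nothing to compare it against; your route --- necessity via the two-sided Taylor expansion of $F''$ at each endpoint using $F''\geq 0$, sufficiency by attaching $T_b(x)+A(x-b)^{m+1}$ and $T_a(x)+A'(a-x)^{m+1}$ with $A,A'$ large and invoking the boundedness of $-T_b''(x)/\bigl[(m+1)m(x-b)^{m-1}\bigr]$ away from $b$ --- is precisely the kind of direct one-dimensional argument the authors presumably intended, and all the estimates check out. Two small points are worth making explicit. First, in the gluing step, matching of the one-sided derivatives at $a$ and $b$ yields $F\in C^m(\R)$ by the standard one-sided gluing lemma, which requires that $f^{(k)}(b^-)$ and $f^{(k)}(a^+)$ be the limits of $f^{(k)}$ from the interior of $I$; this is the natural reading of the hypothesis, and it is in any case necessary for $f$ to admit any $C^m$ extension at all, convex or not, so nothing is lost, but the sentence ``because derivatives of order $\leq m$ match'' is where this is silently used. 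Second, your justification that $T_b''\geq 0$ on $[b,b+\delta]$ ``because $j-2$ is even'' is slightly misplaced: to the right of $b$ one only needs $c_j>0$, since $(x-b)^{j-2}\geq 0$ there regardless of parity; the evenness of $j$ is what the symmetric construction genuinely uses on the outward side of $a$, where $(x-a)^{j-2}\geq 0$ precisely because $j-2$ is even (and it is also what the left-hand expansion forces in your necessity argument). Neither point affects the validity of the proof.
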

\noindent The easy proof is left to the reader's care.

In the special case that condition $(CW^k)$ is satisfied with a strict inequality for some $k$, the problem also becomes much easier to solve, because in this situation $f$ must be convex on a neighbourhood of $C$, and then we may use the following by-product of our proof of Theorem \ref{maintheorem Cinfty}

\begin{proposition}\label{if f is convex on a neighbourhood of C then the problem is easy}
Let $m\in\N$. If $C\subset\R^n$ is compact, and if there exists an open convex neighbourhood $U$ of $C$ such that $f:U\to\R$ is $C^m$ and convex, then there exists a convex function $F\in C^{m}(\R^n)$ such that $F=f$ on $C$.
\end{proposition}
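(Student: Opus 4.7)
The plan is to reduce the Proposition to the hypotheses of Theorem~\ref{maintheorem Cinfty} by showing that $f$, together with its natural Taylor jets on $C$, automatically satisfies both $(W^k)$ and $(CW^k)$ for every $k\leq m$. To set up, since $C$ is compact and $U$ is open, I would first pick a compact convex set $K$ with $C\subset\interior(K)\subset K\subset U$, so that $f$ is $C^m$ and convex on a compact convex neighbourhood of $C$, and then set $P^k_y:=J^k_y f$ for each $y\in C$ and each $1\leq k\leq m$.

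Next I would verify the two conditions. Condition $(W^k)$ follows at once from Taylor's theorem, since $f\in C^m(U)$ and $C$ is compactly contained in $U$. For $(CW^k)$ with $k\geq 2$, the key observation is that convexity of $f$ on $U$ yields $D^2 f(y+tw)(v,v)\geq 0$ whenever $y\in C$, $v,w\in\Sn$, and $|t|\leq t_0:=\dist(C,\R^n\setminus U)$. Since $D^2 f\in C^{k-2}(U)$, Taylor-expanding $D^2 f(y+tw)(v,v)$ in $t$ around $t=0$ yields
$$
0\leq D^2 f(y+tw)(v^2)=\sum_{j=0}^{k-2}\frac{t^j}{j!}D^{j+2}P^k_y(y)(w^j,v^2)+R_k(t,y,w,v),
$$
with $R_k=o(t^{k-2})$ uniformly over $(y,v,w)\in C\times\Sn\times\Sn$. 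The finite sum is precisely the left-hand side of $(CW^k)$, and rearranging gives it $\geq -|R_k|\geq -\varepsilon t^{k-2}$ for $t$ small, uniformly, which is exactly $(CW^k)$.

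With both conditions in hand, the final step is to invoke the construction from the proof of Theorem~\ref{maintheorem Cinfty}. When $m=\infty$, this applies directly and produces the desired $F\in C^\infty(\R^n)$. For finite $m\geq 2$, I would run the same construction on the truncated family $\{P^k_y\}_{y\in C,\,k\leq m}$, expecting a $C^m$ convex function $F$ on $\R^n$ with $J^k_yF=P^k_y$ for all $y\in C$ and $k\leq m$; in particular $F=f$ on $C$. The main obstacle I anticipate is precisely this finite-order passage: the main theorem is stated for $C^\infty$ input, and one must verify that its proof yields $C^m$ output from $C^m$ data. This should hold if the construction in the proof is essentially a supremum of modified Taylor polynomials together with a smooth convex correction, since smoothness of the output would then be inherited from smoothness of the input; but the details of the construction must be consulted to confirm this.
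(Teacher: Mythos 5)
Your verification of $(W^k)$ and $(CW^k)$ for $k\le m$ on a compact convex fattening of $C$ is fine, but the final step is a genuine gap, and it is exactly the one you flag without resolving: for finite $m$ you cannot "run the same construction on the truncated family" and expect $C^m$ output. The proof of Theorem \ref{maintheorem Cinfty} is not a supremum of modified Taylor polynomials; it produces $F=f+a\varphi$, where $\varphi$ is manufactured from a function $\varepsilon$ whose construction uses the radii $r_k$ of Lemma \ref{inequalities rm} — that is, the hypotheses $(CW^k)$ — for \emph{all} $k\in\N$, precisely so that $\varepsilon$ vanishes to infinite order at $0$ (this is forced because $\varepsilon(2t)$ is divided by $t^{n+3}$, reflecting the $t^{n+1}$ cost of the spherical-cap volume in Lemma \ref{choice of w}). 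With only $C^m$ data and $(CW^k)$ for $k\le m$, this scheme degrades: the paper's finite-order version of the very same construction is Theorem \ref{maintheoremfinitecase}, which loses $n+1$ derivatives, and even for FIO bodies (Theorem \ref{theoremspecialcase}) one derivative is lost. So your proposed reduction yields at best a $C^{m-n-1}$ convex extension, not the $C^m$ extension claimed in the Proposition; hoping that "smoothness of the output is inherited from the input" is not borne out by the construction.

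What you are missing is that convexity of $f$ on a whole neighbourhood $U$ makes all of the quantitative $(CW)$ machinery unnecessary; this is the paper's intended proof, an "obvious variation" of the last Lemma of Section 2. Set $K:=\mathrm{conv}(C)$, a compact convex subset of $U$ since $U$ is convex. Choose $\theta\in C^\infty(\R^n)$ with compact support contained in $U$ and $\theta=1$ on a neighbourhood of $K$, and put $\tilde f=\theta f\in C^m(\R^n)$: it has compact support and coincides with $f$ (hence is convex) on $\{x: d(x,K)\le r\}$ for some $r>0$. Let $\varphi$ be the $C^\infty$ convex function of Subsection \ref{sectioncontructionvarphi} built for $K$: $\varphi\ge 0$, $\varphi^{-1}(0)=K$, all jets of $\varphi$ vanish on $K$, and $D^2\varphi(x)(v^2)>0$ for $x\notin K$ (cf.\ \eqref{second differential of varphi}). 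On $\{d(\cdot,K)\le r\}$ the function $\tilde f+A\varphi$ has nonnegative Hessian for every $A\ge0$, simply because both summands are convex there; on the compact annulus $\{r\le d(\cdot,K)\le R\}$ containing the rest of the support of $\tilde f$, one has $\inf D^2\varphi(x)(v^2)\ge 1/M$ and $\sup|D^2\tilde f(x)(v^2)|\le M$ for some $M\ge1$, so $A=2M^2$ gives nonnegative (indeed positive) Hessian there; outside, $\tilde f=0$ and $\varphi$ is convex. Hence $F:=\tilde f+A\varphi$ is convex, of class $C^m$ (no smoothness is lost because $\varphi$ is $C^\infty$ regardless of $m$), and $F=f$ on $K\supset C$. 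This compactness argument is the whole proof; no $(CW^k)$ estimates, and no appeal to Theorem \ref{maintheorem Cinfty}, are needed.
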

See Section \ref{proof of corollary in case m finite} below for more information. Let us also note that in this case $f$ automatically satisfies $(CW^p)$ for all the rest of $p$'s.

\begin{proposition}
Let $m\in \N\cup\{\infty\}$, $m\geq 2$. If $f\in C^{m}(\R^n)$ satisfies $(CW^{k})$ with a strict inequality on $\partial C$ for some $k\geq 2$, then $f$ satisfies $(CW^p)$ on $\partial C$ for every $p\in\{2, \ldots, m\}$, if $m$ is finite, and for every $p\in\N$ with $p\geq 2$, if $m=\infty$.
\end{proposition}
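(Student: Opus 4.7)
The plan is to reduce everything to a one-variable Taylor expansion. For fixed $y\in \partial C$ and unit vectors $v,w\in\mathbb{S}^{n-1}$, consider the auxiliary function $h_{y,v,w}(t):=D^2 f(y+tw)(v^2)$. A straightforward differentiation (chain rule, recalling that $v$ is held fixed in the last two slots) gives $h_{y,v,w}^{(j)}(0)=D^{j+2}f(y)(w^j,v^2)$, so the polynomial
$T_q(t):=\sum_{j=0}^{q-2}\tfrac{t^j}{j!}D^{j+2}f(y)(w^j,v^2)$
appearing in condition $(CW^q)$ is exactly the Taylor polynomial of order $q-2$ of $h_{y,v,w}$ at $0$. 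Since $f\in C^m(\mathbb{R}^n)$ and $\partial C$ is compact, there is a constant $M>0$ such that $|D^j f(y)(u_1,\ldots,u_j)|\leq M$ for every $y\in\partial C$, every $u_i\in\mathbb{S}^{n-1}$, and every $j$ in the relevant (finite) range.

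If $p<k$, I would write $T_p(t)=T_k(t)-\sum_{j=p-1}^{k-2}\tfrac{t^j}{j!}D^{j+2}f(y)(w^j,v^2)$. Using the strict hypothesis $T_k(t)\geq \eta t^{k-2}\geq 0$ on $(0,t_0]$, and bounding the subtracted terms using $M$, I get $T_p(t)\geq -C\,t^{p-1}=-Ct\cdot t^{p-2}$ on $(0,\min\{1,t_0\}]$ for some uniform constant $C=C(M,k,p)$. Given any $\varepsilon>0$, setting $t_\varepsilon:=\min\{1,t_0,\varepsilon/C\}$ yields $T_p(t)\geq -\varepsilon t^{p-2}$ on $(0,t_\varepsilon]$, which is precisely $(CW^p)$.

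If $p>k$ (and $p\leq m$), the analogous identity is $T_p(t)=T_k(t)+\sum_{j=k-1}^{p-2}\tfrac{t^j}{j!}D^{j+2}f(y)(w^j,v^2)$, so $T_p(t)\geq \eta t^{k-2}-C'\,t^{k-1}$ for a uniform constant $C'$. Choosing $t$ small enough that $C't\leq \eta/2$ (say $t\leq \min\{t_0,\eta/(2C')\}$) gives $T_p(t)\geq (\eta/2)t^{k-2}\geq 0$, which trivially dominates $-\varepsilon t^{p-2}$ for every $\varepsilon>0$; thus $(CW^p)$ holds in a stronger, non-negative form. The case $p=k$ is tautological.

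The only point requiring attention is the uniformity of the bounds $C,C'$ over $y\in\partial C$ and $v,w\in\mathbb{S}^{n-1}$, but this follows at once from continuity of the derivatives of $f$ and compactness of $\partial C\times\mathbb{S}^{n-1}\times\mathbb{S}^{n-1}$; consequently the quantifier structure needed for $(CW^p)$ (a single $t_\varepsilon$ working uniformly in $y,v,w$) is delivered automatically. For $m=\infty$, one simply applies the argument separately for each finite $p\geq 2$, since for a fixed $p$ only derivatives of $f$ up to order $\max\{k,p\}$ enter the estimate.
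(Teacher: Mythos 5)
Your argument is correct: comparing the truncations $T_p$ and $T_k$ term by term, absorbing the discarded (or added) terms into a uniform bound $C t^{p-1}$ (resp.\ $C' t^{k-1}$) via compactness of $\partial C\times\mathbb{S}^{n-1}\times\mathbb{S}^{n-1}$, and using the strict lower bound $\eta t^{k-2}$ is exactly the ``easy verification'' the paper leaves to the reader, so there is nothing to compare against beyond noting that this is the intended route. The index bookkeeping ($j$ from $p-1$ to $k-2$ when $p<k$, from $k-1$ to $p-2$ when $p>k\le p\le m$) and the uniform choice of $t_\varepsilon$ are handled correctly, including the case $m=\infty$.
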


We leave the easy verification to the reader's care. As a straightforward consequence of Proposition \ref{if f is convex on a neighbourhood of C then the problem is easy} we will obtain the following.

\begin{corollary}\label{corollary for Cm and CWk with a strict inequality}
Let $m\in \N\cup\{\infty\}$, $m\geq 2$. Let $C$ be a convex compact subset of $\R^n$, and let $f:C\to\R$ be a convex function having a (not necessarily convex) $C^{m}$ extension to an open neighbourhood of $C$. If $f$ and its derivatives satisfy $(CW^ k)$ with a strict inequality  on $C$ for some $2\leq k\leq m$, then there exists a convex function $F\in C^{m}(\R^n)$ such that $F=f$ on $C$.
\end{corollary}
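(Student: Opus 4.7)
The strategy is to reduce the statement to Proposition~\ref{if f is convex on a neighbourhood of C then the problem is easy} (and, in the case $m=\infty$, to Theorem~\ref{maintheorem Cinfty}) by showing that the given $C^m$ extension $\tilde f$ of $f$ is in fact convex on some open convex neighbourhood of $C$. Once this is done, applying the cited results to $\tilde f$ on that neighbourhood produces the desired convex extension $F\in C^m(\R^n)$ with $F=f$ on $C$.

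To obtain convexity of $\tilde f$ near $C$, I would exploit Taylor's formula for $D^2\tilde f$, which is of class $C^{k-2}$ because $m\ge k$. Fix $y\in C$, $w\in\mathbb{S}^{n-1}$ and $t>0$ small enough that $y+tw$ lies in the domain of $\tilde f$; then
$$
D^2\tilde f(y+tw)(v^2)=\sum_{j=0}^{k-2}\frac{t^j}{j!}\,D^{j+2}\tilde f(y)(w^j,v^2)+R(t,y,v,w),
$$
with $R(t,y,v,w)=o(t^{k-2})$ as $t\to 0^+$, uniformly on the compact set $C\times\mathbb{S}^{n-1}\times\mathbb{S}^{n-1}$ thanks to the uniform continuity of $D^k\tilde f$ on a compact neighbourhood of $C$. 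The strict $(CW^k)$ hypothesis says the explicit polynomial part of this expansion is at least $\eta\,t^{k-2}$; choosing $\delta>0$ small enough that $|R(t,y,v,w)|\le\tfrac{\eta}{2}t^{k-2}$ for every $0<t\le\delta$ and every $(y,v,w)$ in that compact set gives
$$
D^2\tilde f(y+tw)(v^2)\ge\tfrac{\eta}{2}\,t^{k-2}\ge 0
$$
throughout this tube, and letting $t\to 0^+$ also yields $D^2\tilde f(y)(v^2)\ge 0$ at every $y\in C$.

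Finally, set $U:=\{x\in\R^n:\dist(x,C)<\delta\}$, shrinking $\delta$ if necessary so that $U$ is contained in the domain of $\tilde f$. Because $C$ is convex, $x\mapsto\dist(x,C)$ is a convex function, so $U$ is an open convex neighbourhood of $C$; moreover the nearest-point projection expresses every $x\in U\setminus C$ as $x=\pi_C(x)+tw$ with $\pi_C(x)\in C$, $t=\dist(x,C)<\delta$, $w\in\mathbb{S}^{n-1}$, so the previous paragraph gives $D^2\tilde f\ge 0$ on all of $U$; thus $\tilde f$ is convex on $U$. For finite $m$, Proposition~\ref{if f is convex on a neighbourhood of C then the problem is easy} applied to $\tilde f|_U$ finishes the proof. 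For $m=\infty$, the compatible family $\{J^m_y\tilde f\}_{y\in C,\,m\in\N}$ automatically satisfies $(W^m)$ (since $\tilde f\in C^\infty$) and $(CW^m)$ on $C$ for every $m\ge 2$ (since $D^2\tilde f\ge 0$ on $U$, so Taylor's formula for $D^2\tilde f$ forces the sum in $(CW^m)$ to exceed $-\varepsilon\, t^{m-2}$ for small $t$), and Theorem~\ref{maintheorem Cinfty} then delivers the $C^\infty$ convex extension. The only genuine technical point throughout is the uniformity of the Taylor remainder in $(y,v,w)$, which is immediate from compactness and the continuity of $D^k\tilde f$.
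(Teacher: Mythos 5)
Your argument is correct and takes essentially the same route as the paper: use the strict $(CW^k)$ inequality together with Taylor's theorem and the uniform continuity of the top-order derivative to get $D^2\tilde f\ge 0$ on a tube $\{x:\dist(x,C)<\delta\}$, hence convexity of $\tilde f$ on that convex neighbourhood, and then conclude via Proposition \ref{if f is convex on a neighbourhood of C then the problem is easy} (the paper applies it for all $m$, while you route the $m=\infty$ case through Theorem \ref{maintheorem Cinfty}, which is equally valid). Expanding only to order $k$ rather than $m$ is a slightly cleaner bookkeeping choice, not a different method.
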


The easiest instance of application of this corollary is of course when $f$ has a strictly positive Hessian on $\partial C$, in which case we recover the aforementioned consequence of the results of M. Ghomi's \cite{GhomiPAMS2002}  and M. Yan's \cite{MinYan}.

In the case $m\geq 2$ with $m$ finite, the method of proof of Theorem \ref{maintheorem Cinfty} does not allow us to obtain sufficiency of the conditions $(W^m)$ and $(CW^m)$ for a function $f$ to have a convex $C^m$ extension. The best we can obtain with this method is the following.

\begin{theorem}\label{maintheoremfinitecase}
Let $C$ be a compact convex subset of $\R^n$. Let $f:C\to\R$ be a function, $m\in\N$ with $m\geq n+3 $, and let $\{P^{m}_{y}\}_{y\in C}$ be a family of polynomials of degree less than or equal to $m$ and such that $P^{m}_y(y)=f(y)$ for every $y\in C$. Assume that $\{P^{m}_{y}\}_{y\in C}$ satisfies $(W^{m})$ and $(CW^{m})$. Then $f$ has a convex extension $F\in C^{m-n-1}(\R^n)$ such that $J^{m-n-1}_{y}F=P^{m-n-1}_{y}$ for every $y\in C$.
\end{theorem}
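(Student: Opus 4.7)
The plan is to mirror the argument behind Theorem \ref{maintheorem Cinfty} but work at a fixed finite order of smoothness, paying careful attention to how many derivatives are consumed along the way. First I would apply the classical Whitney extension theorem, which by $(W^m)$ yields a function $G\in C^m(\R^n)$ with $J^m_y G = P^m_y$ for every $y\in C$. This $G$ need not be convex, so the task reduces to replacing $G$ by a convex function whose Taylor polynomial of order $m-n-1$ at every $y\in C$ coincides with $P^{m-n-1}_y$ and which is globally of class $C^{m-n-1}$.

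The next (and main) step is to exploit $(CW^m)$ in order to produce a convex function $H$ of class $C^m$ on an open neighbourhood $U$ of $C$ with $J^{m-1}_y H = P^{m-1}_y$ for every $y\in C$, and which is strongly convex on $U\setminus C$. Modulo a correction whose $m$-jet vanishes on $C$, the function $G$ is already close to being convex by virtue of $(CW^m)$: its Taylor polynomial of order $m$ at any $y\in C$, tested against the directions $(w,v^2)$ with $|v|=|w|=1$, has the required non-negativity to leading order. A suitable correction can be built by writing $H=G+R$, where $R$ is assembled out of a Whitney partition of unity $\{\phi_Q\}$ associated to a cube decomposition of $\R^n\setminus C$, multiplied by small strongly convex bumps whose low-order jets at the nearest point of $C$ vanish. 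It is precisely in this partition-of-unity construction that one loses $n+1$ orders of smoothness, since the $k$-th derivatives of the $\phi_Q$ grow like $\delta^{-k}$ with $\delta$ proportional to $\mathrm{dist}(\cdot,C)$, and controlling the Hessian of $H$ from below on $U\setminus C$ requires $m-n-1\ge 2$, which is exactly the hypothesis $m\ge n+3$.

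Once such an $H$ is available on $U$, the final step is to paste $H$ with the global Lipschitz convex envelope
\[
m(G)(x)=\sup_{y\in C}\bigl\{G(y)+\langle \nabla G(y),x-y\rangle\bigr\}.
\]
Since $H$ and $m(G)$ coincide on $C$ and $H$ is strongly convex on $U\setminus C$, I would choose a smaller neighbourhood $V$ with $C\subset V\subset\overline{V}\subset U$ so that $H>m(G)$ on $U\setminus V$ while they agree on $C$. The function $F=\max\{H,m(G)\}$, made $C^{m-n-1}$ via a smoothed-max construction (an infimal-convolution-type operation that preserves convexity and agrees with $H$ on a neighbourhood of $C$), is then convex, of class $C^{m-n-1}(\R^n)$, and realises the prescribed $(m-n-1)$-jets, by Proposition \ref{if f is convex on a neighbourhood of C then the problem is easy} applied on $U$.

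The hard part will be the middle step: constructing $H$. One has to convert the purely asymptotic information provided by $(CW^m)$, namely $D^2P^m_y(y)(v^2)+\cdots+\frac{t^{m-2}}{(m-2)!}D^mP^m_y(y)(w^{m-2},v^2)\ge -\varepsilon t^{m-2}$, into a genuine lower bound for the Hessian of $H$ on a full open neighbourhood of $C$, and this has to be done while keeping all jets of order $\le m-1$ on $C$ unchanged. Balancing the size of the corrective bumps against the growth of the derivatives of the Whitney partition of unity is what forces both the dimensional loss $n+1$ in the smoothness and the threshold $m\ge n+3$.
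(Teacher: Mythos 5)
Your overall architecture does match the paper's (Whitney extension first, then a convex correction vanishing to high order on $C$ whose Hessian outweighs the measured lack of convexity of the extension, then a global argument via compact support or Proposition \ref{if f is convex on a neighbourhood of C then the problem is easy}), but the decisive middle step is missing, and the mechanism you sketch for it would fail. A correction $R=\sum_Q \phi_Q b_Q$ built from a Whitney partition of unity on $\R^n\setminus C$ times ``strongly convex bumps'' is not convex --- multiplying by cut-off functions destroys convexity --- and nothing in your sketch bounds $D^2(G+R)$ from below on $U\setminus C$ while keeping the jets of order $\leq m-1$ on $C$ unchanged. Converting the asymptotic condition $(CW^m)$ into the pointwise bound $D^2G(x)(v^2)\geq-\omega(d(x,C))\,d(x,C)^{m-2}$ (the paper's Lemma \ref{functionomega}), and then producing a \emph{genuinely convex} function $\varphi$ with $J^{m-n-1}_y\varphi=0$ for $y\in C$ and $D^2\varphi(x)(v^2)\geq k\,d(x,C)^{m-2}\omega(d(x,C))$, is the entire content of the proof; the paper gets the latter not from a partition of unity in space but from a Ghomi-type integral $\varphi(x)=\int_{\Sn}g(\langle x,w\rangle-h(w))\,dw$, with $h$ the support function of $C$ and $g$ an $(m-n-1)$-fold iterated integral of $\omega$, which is convex by construction.

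Consequently your explanation of the loss of $n+1$ derivatives (growth $\delta^{-k}$ of derivatives of the partition functions) and of the threshold $m\geq n+3$ is not the right mechanism either. In the actual argument the loss occurs because the lower bound for $D^2\varphi(x)(v^2)$ only uses directions $w$ in a spherical cap of angular radius comparable to $\alpha_x\sim d(x,C)$: its volume contributes a factor $\alpha_x^{\,n-1}$ and $\langle v,w\rangle^2$ contributes $\alpha_x^{\,2}$, i.e.\ a total loss of $d(x,C)^{n+1}$, which forces $g$ to be integrated only $m-n-1$ times so that $g''(t/2)\gtrsim t^{\,m-n-3}\omega(t)$ and the product recovers $t^{\,m-2}\omega(t)$; this is exactly what caps the smoothness of $\varphi$ at $C^{m-n-1}$ and requires $m\geq n+3$. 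Two further points: you assert $H\in C^m$ while simultaneously claiming the construction of $H$ loses $n+1$ orders, which is inconsistent; and your final pasting with $m(G)$ plus a smoothed max is both unnecessary (the paper simply takes $F=f+a\varphi$ with $f$ arranged to have compact support and $a$ large, since $\varphi$ is already convex on all of $\R^n$ with strictly positive Hessian off $C$) and delicate, since $m(G)$ presupposes convexity of $f$ on $C$ and the smoothed max must be shown to coincide with $H$ on a neighbourhood of $C$, or the prescribed $(m-n-1)$-jets on $C$ are lost.
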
 

The above result is probably not optimal, at least in the case when $C$ has nonempty interior. However, Example \ref{CWm with m finite is not sufficient when intC is empty} below will show that if $C$ has empty interior then one cannot expect to find smooth convex extensions of jets satisfying $(W^m)$ and $(CW^m)$ on $C$ without losing at least two orders of smoothness. On the positive side, there is a class of relatively nice convex bodies for which Theorem \ref{maintheoremfinitecase} can be very much improved.

\begin{definition}[FIO bodies of class $m$]\label{definitionscm3}
Given an integer $m \geq 2,$ we will say that a subset $C$ of $\Rn$ is an ovaloid of class $C^m$ if there exist $ M>0$ and a function $\psi: \Rn \to \R$ such that
\begin{enumerate}
\item[(i)] $\psi$ is of class $C^m(\Rn).$
\item[(ii)] $D^2 \psi (x)(v^2)  \geq M$ for all $x\in \Rn$ and for all $v\in \Sn.$
\item[(iii)] $C= \psi^{-1}(-\infty, 1].$
\end{enumerate}
We will also say that a set $C$ is $(FIO^m)$, or an FIO body of class $C^m$, if $C$ is the intersection of a finite family of ovaloids of class $C^m$.
\end{definition}

By restricting our attention to the class of FIO bodies, we can find convex extensions of functions satisfying $(W^m)$ and $(CW^m)$ with a loss of just one order of smoothness.

\begin{theorem}\label{theoremspecialcase}
Let $C$ be a convex subset of $\R^n.$ Let $f:C\to\R$ be a function, $m\in\N$ with $m\geq 3$, and let $\{P^{m}_{y}\}_{y\in C}$ be a family of polynomials of degree less than or equal to $m$ and such that $P^{m}_y(y)=f(y)$ for every $y\in C$. Assume that $\{P^{m}_{y}\}_{y\in C}$ satisfies $(W^{m})$ and $(CW^{m})$, and that $C$ is $(FIO^{m-1}).$ Then $f$ has a convex extension $F\in C^{m-1}(\R^n)$ such that $J^{m-1}_{y}F=P^{m-1}_{y}$ for every $y\in C$.
\end{theorem}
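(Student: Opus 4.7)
The plan is to reduce to a Whitney extension and then perturb it by a strongly convex, boundary-supported correction that takes advantage of the FIO structure of $C$ while only costing one order of smoothness.

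First, apply Whitney's extension theorem via $(W^m)$ to produce a $\tilde F\in C^m(\R^n)$ with $J^m_y\tilde F = P^m_y$ for every $y\in C$. Write $C=\bigcap_{i=1}^N\Omega_i$ with $\Omega_i=\psi_i^{-1}((-\infty,1])$, each $\psi_i\in C^{m-1}(\R^n)$ satisfying $D^2\psi_i\geq M_i\,I$. Since the function $t\mapsto t_+^m$ lies in $C^{m-1}(\R)$ but not in $C^m(\R)$, and since all its derivatives up to order $m-1$ vanish at $0$, the function $\varphi_i(x):=(\psi_i(x)-1)_+^m$ belongs to $C^{m-1}(\R^n)$ and its full $(m-1)$-jet vanishes identically on $\Omega_i$. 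I then set
$$F(x) \;:=\; \tilde F(x)+\lambda\sum_{i=1}^N\varphi_i(x),$$
where $\lambda>0$ is a large constant to be chosen. The regularity $F\in C^{m-1}(\R^n)$ is immediate, and for $y\in C$ we have $J^{m-1}_y F = J^{m-1}_y\tilde F = P^{m-1}_y$, so the jet-matching conclusion is automatic.

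The core of the argument is to choose $\lambda$ so that $F$ is convex. On $\operatorname{int}(C)$ one has $F=\tilde F=f$, and the remarks preceding Theorem~\ref{maintheorem Cinfty} show that $(CW^m)$ forces $D^2 f\geq 0$ there; by continuity this extends to all of $C$. For $x\notin C$ set $d(x)=\operatorname{dist}(x,C)$ and compute
$$D^2F(x)=D^2\tilde F(x)+\lambda\sum_i\Bigl[m(m-1)(\psi_i(x)-1)_+^{m-2}\nabla\psi_i(x)\!\otimes\!\nabla\psi_i(x)+m(\psi_i(x)-1)_+^{m-1}D^2\psi_i(x)\Bigr].$$
Combining $(CW^m)$ with Taylor's formula for $D^2\tilde F$ at the nearest point $y\in C$ to $x$ produces a lower bound $D^2\tilde F(x)(v^2)\geq -\omega(d(x))\,d(x)^{m-2}$ for every $v\in\mathbb{S}^{n-1}$ and all $x$ sufficiently close to $C$, where $\omega\to 0^+$ as $d\to 0^+$. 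On the other hand, the strong convexity of each $\psi_i$ together with a Hoffman-type estimate for the intersection system $\psi_i\leq 1$ gives a uniform constant $\beta>0$ such that $\max_i(\psi_i(x)-1)_+\geq \beta\, d(x)$ on a neighborhood of $\partial C$, while far from $C$ the correction grows like $|x|^{2m}$ (dominating any bounded-Hessian contribution from $\tilde F$ on compacta) for $\lambda$ large.

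The main obstacle lies in directions $v$ tangent to $\partial C$ near boundary points $y_0$ where $D^2\tilde F(y_0)(v^2)=0$. In such directions the rank-one term of $D^2\varphi_{i^*}$ vanishes, and the remaining contribution $\lambda m(\psi_{i^*}-1)^{m-1}D^2\psi_{i^*}(v^2)$ is only of order $d(x)^{m-1}$, seemingly too small to absorb an $\omega(d)\,d^{m-2}$ error. To overcome this, I would exploit the following sharpening of $(CW^m)$ at degenerate boundary points: since $f$ is convex on $C$ and $D^2\tilde F(y_0)(v^2)$ attains its nonnegative minimum at the boundary point $y_0$ with outward normal $n$, the Lagrange multiplier inequality forces $D^3\tilde F(y_0)(n,v^2)\leq 0$, while $(CW^m)$ forces the same quantity to be $\geq 0$; iterating, all intermediate normal-tangential coefficients $D^k\tilde F(y_0)(n^{k-2},v^2)$ with $2\leq k\leq m-1$ must vanish and $D^m\tilde F(y_0)(n^{m-2},v^2)\geq 0$. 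The resulting Taylor expansion of $D^2\tilde F(x)(v^2)$ along the outward normal ray is $O(d(x)^{m-1})$ rather than $O(d(x)^{m-2})$, bringing it exactly to the scale of the tangential contribution of $\lambda \varphi_{i^*}$, so a sufficiently large (yet finite) $\lambda$ makes $D^2 F(x)(v^2)\geq 0$ uniformly. A compactness argument in $(y_0,v)$ then yields a single $\lambda$ that works on a neighborhood of $\partial C$, and the far-field estimate completes the proof.
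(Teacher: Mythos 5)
Your overall architecture (Whitney extension plus a boundary-supported convex correction built from the defining functions $\psi_i$ of the FIO body, using $D^2\psi_i\geq M_i$ and a Hoffman/gauge comparison $\max_i(\psi_i(x)-1)\gtrsim d(x,C)$) is the same as the paper's, and the regularity and jet-matching parts are fine. The genuine gap is in the choice of the correction and the attempted rescue in tangential directions. With $\varphi_i=(\psi_i-1)_+^m$, the only convexity you gain in directions $v$ with $\langle\nabla\psi_i(x),v\rangle\approx 0$ is $\lambda m(\psi_i(x)-1)_+^{m-1}D^2\psi_i(x)(v^2)=O(\lambda\, d(x,C)^{m-1})$ near $\partial C$, while the lack of convexity of the Whitney extension is only of size $\omega(d(x,C))\,d(x,C)^{m-2}$, where $\omega$ is an arbitrary modulus of continuity coming from the mere continuity of $D^m\tilde F$ (this is exactly Lemma \ref{functionomega}). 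For any fixed $\lambda$ this is insufficient as soon as $\omega(d)/d\to\infty$ (e.g.\ $\omega(d)=\sqrt d$), which is perfectly compatible with the hypotheses; so no finite $\lambda$ can work with this correction.

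The proposed sharpening does not close this gap, for two reasons. First, the iteration claim is false: from $(CW^m)$ together with $D^2\tilde F(y_0)(v^2)=0$ you do get $D^3\tilde F(y_0)(\cdot,v^2)=0$ (linearity in $w$), but at the next order you only get a sign, $D^4\tilde F(y_0)(w^2,v^2)\geq 0$, and the ``Lagrange multiplier'' argument cannot force it to vanish: a nonnegative function on $C$ that vanishes at a boundary point and has vanishing first normal derivative there can still have strictly positive second normal derivative (already in dimension one, $f(x)=x^4$ on $[0,1]$ at $y_0=0$ has $D^2f(0)=D^3f(0)=0$ but $D^4f(0)>0$). Second, and decisively, even if all the coefficients $D^k\tilde F(y_0)(n^{k-2},v^2)$, $3\leq k\leq m$, did vanish, Taylor's theorem for a function that is only $C^m$ gives a remainder which is $o(d^{m-2})$ uniformly, never $O(d^{m-1})$, because $D^m\tilde F$ is merely continuous; so the claimed $O(d^{m-1})$ bound on the deficit along the normal ray is unobtainable. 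The paper circumvents precisely this by tailoring the correction to the deficit: it takes $g$ to be the $(m-1)$-fold iterated integral of $\omega(2^{m-2}s)$, so that $g\in C^{m-1}$, $J^{m-1}_0g=0$, and $g'(t)\geq c\,t^{m-2}\omega(t)$, and sets $\varphi=\sum_j g\bigl(L\beta^{-1}(\psi_j(x)-1)\bigr)$; then the term $\sum_j h'(\psi_j-1)D^2\psi_j(v^2)\geq M h'(\psi_{j_x}-1)\gtrsim d^{m-2}\omega(d)$ gives convexity of the right size in \emph{every} direction, tangential ones included, without any extra vanishing of derivatives at the boundary. If you replace $(\psi_i-1)_+^m$ by this $\omega$-adapted profile, your argument becomes the paper's proof.
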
 

Let us conclude this introduction with an important remark: one might wonder whether the conditions $(CW^m)$ could be deduced from the condition $D^2f\geq 0$ on $C$, at least in the case that $C$ has nonempty interior. The answer is negative: in view of Theorem \ref{theoremspecialcase} and the example given in equation \eqref{positive Hessian does not imply CWm} above, the condition $D^2f\geq 0$ on a convex body $C$ does not imply condition $(CW^m)$ on $C$ for any $m\geq 4$. Furthermore, by making some straightforward  calculations on can show that the function $f$ defined in \eqref{positive Hessian does not imply CWm} does not satisfy condition $(CW^3)$ either.  

The rest of the paper is organized as follows. In Section \ref{section proof cinfty} we will prove Theorem \ref{maintheorem Cinfty} and Corollary \ref{corollary for Cm and CWk with a strict inequality}. In Section \ref{sectionproofcm} we will prove Theorem \ref{maintheoremfinitecase} and Theorem \ref{theoremspecialcase}. Finally, in Section \ref{sectioncounterexamples} we will make some remarks and present some counterexamples related to extension problems for convex functions.

\section{$C^{\infty}$ convex extensions}\label{section proof cinfty}

In this section we will prove Theorem \ref{maintheorem Cinfty}. By using Whitney's extension theorem we may and do assume that $f\in C^{\infty}(\R^n)$, with $J_y^m f= P_y^m$ for all $m\in \N$ and all $y\in C$, and that $f$ satisfies condition $(CW^m)$ on $C$ for every $m\in\N$. We may also assume that $f$ has a compact support contained in $C+B(0,2)$.

\subsection{Idea of the proof.} Let us give a rough sketch of the proof so as to guide the reader through the inevitable technicalities. We warn the reader, however, that what we now say we are going to do is not exactly what we will actually do. Our proof could be rewritten to match this sketch exactly, but at the cost of adding further technicalities, which we do not feel would be pertinent. 

This proof has two main parts. In the first part we will estimate the possible lack of convexity of $f$ outside $C$: by using the conditions $(CW^m)$, a Whitney partition of unity, and some ideas from the proof of the Whitney extension theorem in the $C^\infty$ case, we will construct a function $\eta\in C^\infty(\R)$ such that $\eta\geq 0$, $\eta^{-1}(0)=(-\infty, 0]$, and $\min_{|v|=1}D^{2}f(x)(v^2)\geq -\eta\left( d(x,C)\right)$ for every $x\in\R^n$. 

In the second part of the proof we will compensate the lack of convexity of $f$ outside $C$ with the construction of a function $\psi\in C^{\infty}(\R^n)$ such that $\psi\geq 0$, $\psi^{-1}(0)=C$, and
$\min_{|v|=1}D^{2}\psi(x)(v^2)\geq 2\eta\left( d(x,C)\right)$.
Then, by setting $F:=f+\psi$ we will conclude the proof of Theorem \ref{maintheorem Cinfty}.

There are many ways to construct such a function $\psi$. The essential point is to write $C$ as an intersection of a family of half-spaces, and then to make a weighted sum, or an integral, of suitable convex functions composed with the linear forms that provide those half-spaces. If the sequence of linear forms is equi-distributed, in the weighted sum approach, or if one uses a measure equivalent to the standard measure on $\mathbb{S}^{n-1}$, in the integral approach, then the different functions $\psi$ produced by these methods will have equivalent convexity properties. See \cite{AFPAMS2002} for an instance of the weighted sum approach, and \cite[Proposition 2.1]{GhomiBLMS2004} for the integral approach. 

Of course our situation is more complicated than that of these references, as we need to find quantitative estimations of the convexity of $\psi$ outside $C$ which are strong enough to outweigh our previous estimations of the lack of convexity of $f$ outside $C$. It turns out that, in the present $C^{\infty}$ case, this goal can be achieved with either method of construction of $\psi$. Here we will follow the integral approach of Ghomi's in \cite[Proposition 2.1]{GhomiBLMS2004}, as it will lead us to easier calculations.

\subsection{First lower estimates for the Hessian of $f$: the numbers $\lbrace r_m \rbrace_m$}
We next show how the assumption of conditions $(CW^m)$ for every $m\geq 2$ implies a lower bound for the Hessian of $f$ in terms of the distance to $C.$

\begin{lemma}\label{inequalities rm}
For every $m\in \N,$ with $m \geq 2,$ there exists a number $r_m>0$ such that, whenever $d(x,C) \leq r_m,$ we have
$$
D^2f(x)(v^2) \geq - d(x,C)^{m-2} \quad \text{for every} \quad v\in \Sn.
$$
\end{lemma}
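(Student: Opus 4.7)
The plan is to reduce the claim to a pointwise Taylor expansion at a nearest point. Given $x$ with small $d(x,C)$, I choose $y\in C$ realizing the distance, set $t=|x-y|$ and $w=(x-y)/t$, so that $x=y+tw$. Then I read off $D^2f(x)(v^2)$ as the value at $t$ of the scalar function
$$
g(s):=D^2f(y+sw)(v^2),\qquad s\in[0,t],
$$
and compare it with the quantity that the hypothesis $(CW^m)$ controls, namely the value at $t$ of its degree-$(m-2)$ Taylor polynomial $T(s)=\sum_{j=0}^{m-2}\tfrac{s^j}{j!}D^{j+2}f(y)(w^j,v^2)$.

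Next, I use that, since $f\in C^m(\R^n)$ and $J^m_yf=P^m_y$ on $C$, one has $D^{j+2}f(y)=D^{j+2}P^m_y(y)$ for $0\leq j\leq m-2$, so the polynomial $T$ is exactly the expression appearing in the definition of $(CW^m)$. Applying $(CW^m)$ with $\varepsilon=1/2$, I obtain $t_{1/2}>0$ such that
$$
T(t)\;\geq\;-\tfrac12\,t^{m-2}\qquad\text{for all }y\in C,\ v,w\in\mathbb{S}^{n-1},\ 0<t\leq t_{1/2}.
$$

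The remaining ingredient is a uniform Peano remainder estimate for $g$. Writing $g(t)=T(t)+R(t,y,v,w)$ with
$$
R(t,y,v,w)=\tfrac{1}{(m-3)!}\int_0^t(t-s)^{m-3}\bigl(g^{(m-2)}(s)-g^{(m-2)}(0)\bigr)\,ds
$$
(and with the obvious $m=2$ adjustment, $R\equiv g(t)-g(0)$), the integrand involves $D^mf$ evaluated near $y$. Because $f$ has compact support contained in $C+B(0,2)$, the derivative $D^mf$ is uniformly continuous on $\R^n$, hence $\sup_{y\in C,\,v,w\in\mathbb{S}^{n-1}}|R(t,y,v,w)|/t^{m-2}\to 0$ as $t\to 0^+$. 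I therefore choose $r_m\in(0,t_{1/2}]$ so small that $|R(t,y,v,w)|\leq \tfrac12 t^{m-2}$ for $0<t\leq r_m$, uniformly in $y,v,w$.

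Putting the two bounds together yields $D^2f(y+tw)(v^2)=T(t)+R\geq -t^{m-2}=-d(x,C)^{m-2}$ for every $x$ with $0<d(x,C)\leq r_m$; the boundary case $x\in C$ follows at once from $(CW^m)$ on letting $t\to 0^+$, which gives $D^2f(x)(v^2)\geq 0$. The argument is essentially just unpacking $(CW^m)$, so no serious obstacle arises; the one point that requires care is the uniformity of the Peano remainder over $y\in C$, $v,w\in\mathbb{S}^{n-1}$, which is why the compact support of $f$ (already arranged at the start of the section) is being used.
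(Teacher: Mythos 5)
Your proposal is correct and follows essentially the same route as the paper's proof: project $x$ onto $C$, expand $D^2f(\cdot)(v^2)$ by Taylor's theorem at the nearest point in the direction $w$, apply $(CW^m)$ with $\varepsilon=\tfrac12$ to the polynomial part, and absorb the remainder (bounded uniformly in $y,v,w$ via uniform continuity of $D^mf$) into another $\tfrac12 t^{m-2}$. The only differences are cosmetic: you use the integral form of the remainder where the paper uses the Lagrange form, and you explicitly record the trivial case $x\in C$.
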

\begin{proof}
Fix $m\in \N,$ with $m \geq 2,$  $x\in\R^n\setminus C$ and $v\in \Sn.$ Let $y$ be the unique point of $C$ with the property that $d(x,C)=|x-y|.$ Set $t:=d(x,C)$ and $w= (x-y)/|x-y|.$ By Taylor's Theorem, we can write
\begin{align*}
D^2f(x)(v^2)  =& D^2f(y)(v^2)+ t\: D^3f(y)(w,v^2)+ \cdots + \frac{t^{m-2}}{(m-2)!} D^mf(y)(w^{m-2},v^2) \\
& + \frac{t^{m-2}}{(m-2)!} \left[ D^m f(y+sw)(w^{m-2},v^2)-D^mf(y)(w^{m-2},v^2) \right],
\end{align*}
for some $s\in [0,t].$ Since $f$ satisfies the condition $(CW^m),$ there exists a positive number $r_m$, independent of $y,v$ and $w$, such that
$$
\inf_{0<r\leq r_m } \left\lbrace \frac{D^2f(y)(v^2) + r D^3f(y)(w,v^2) + \cdots + \frac{r^{m-2}}{(m-2)!} D^mf(y)(w^{m-2},v^2)}{r^{m-2}} \right\rbrace \geq -\frac{1}{2}
$$
Thus, if $0<t\leq r_m,$ then
$$
D^2f(x)(v^2) \geq -\frac{t^{m-2}}{2} + \frac{t^{m-2}}{(m-2)!} \left[ D^m f(y+sw)(w^{m-2},v^2)-D^mf(y)(w^{m-2},v^2) \right].
$$
On the other hand, if $s\in [0,t],$ we can write
$$
D^m f(y+sw)(w^{m-2},v^2)-D^mf(y)(w^{m-2},v^2) \leq \| D^m f(x+sw)-D^mf(y) \| ;
$$
where we denote $\|A\|:= \sup_{u_i\in \Sn}|A(u_1, \ldots,u_m)|,$ for every $m$-linear form $A$ on $\R^n.$ Moreover, the above term is smaller than or equal to
$$
\varepsilon_m(t):= \sup_{\left\lbrace z\in\R^n, \, z'\in \partial C, \: |z-z'| \leq t \right\rbrace} \| D^mf(z)-D^mf(z')\|.
$$
Since $D^mf$ is uniformly continuous, there exists $r_m'>0$ such that, if $0<r\leq r_m',$ then $\varepsilon_m(r) \leq \frac{1}{2}$ (in fact we have $\lim_{r\to 0^{+}}\varepsilon_{m}(r)=0$). Therefore, assuming $0<t \leq \min \lbrace r_m,r_m'\rbrace ,$ we obtain
$$
D^2f(x)(v^2) \geq -\frac{t^{m-2}}{2}-\frac{t^{m-2}}{(m-2)!} \varepsilon_m (t) \geq -t^{m-2}.
$$
\end{proof}

\subsection{A Whitney partition of unity on $(0,+\infty)$}
For all $k\in \Z,$ we define the closed intervals
$$
I_k=[2^k , 2^{k+1}], \quad I_k^*=\left[ \frac{3}{4}2^k, \frac{9}{8}2^{k+1} \right].
$$
Obviously $(0,+ \infty)= \bigcup_{k \in \Z} I_k.$ Notice that $I_k$ and $I_k^*$ have the same midpoint and $\ell(I_k^*)= \frac{3}{2}\ell(I_k),$ where $\ell(I_k)=2^k$ denotes the length of $I_k.$ In other words, the interval $I_k^*$ is $I_k$ expanded by the factor $3/2.$
\begin{proposition}\label{properties intervals} The intervals $I_k$, $I_k^{*}$ satisfy the following.
\begin{enumerate}
\item[\bf{1.}] If $t\in I_k^*,$ then
$$
\frac{3}{4}\ell(I_k) \leq t \leq \frac{9}{4} \ell(I_k).
$$
\item[\bf{2.}] If $I_k^*$ and $I_j^*$ are not disjoint, then
$$
\frac{1}{2}\ell(I_k) \leq \ell(I_j) \leq 2 \ell(I_k).
$$
\item[\bf{3.}] Given any $t>0,$ there exists an open neighbourhood $U_t \subset (0,+\infty)$ of $t$ such that $U_t$ intersects at most $2$ intervals of the collection $\lbrace I_k^* \rbrace_{k\in \Z}.$
\end{enumerate}
\end{proposition}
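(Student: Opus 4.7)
The plan is to verify the three statements by direct computation, using only the definitions $\ell(I_k)=2^k$, $I_k=[2^k,2^{k+1}]$, and $I_k^{*}=[\tfrac{3}{4}2^k,\tfrac{9}{8}2^{k+1}]=[\tfrac{3}{4}2^k,\tfrac{9}{4}2^k]$.

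For item (1), I would simply substitute: since $I_k^{*}=[\tfrac{3}{4}\ell(I_k),\tfrac{9}{4}\ell(I_k)]$, membership $t\in I_k^{*}$ is literally the inequality $\tfrac{3}{4}\ell(I_k)\le t\le\tfrac{9}{4}\ell(I_k)$. For item (2), if $t\in I_k^{*}\cap I_j^{*}$, applying item (1) to both $k$ and $j$ yields $\tfrac{3}{4}2^{k}\le t$ and $t\le\tfrac{9}{4}2^{j}$, hence $2^{k-j}\le 3$, so $k-j\le 1$. Symmetrically $j-k\le 1$, and this gives $|k-j|\le 1$, which is exactly $\tfrac{1}{2}\ell(I_k)\le\ell(I_j)\le 2\ell(I_k)$.

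For item (3), the plan is to first establish the stronger point-wise fact that no $t>0$ belongs to three distinct intervals of the family $\{I_k^{*}\}_{k\in\Z}$. Indeed, by item (2), if three distinct indices $k_1<k_2<k_3$ all contain a common point $t$, then the pairwise bounds force $k_3-k_1\le 1$, which is incompatible with three distinct integers. Even more directly, one just checks that $I_{k-1}^{*}\cap I_{k+1}^{*}=\emptyset$: the intersection demands $\tfrac{3}{4}2^{k+1}\le t\le\tfrac{9}{4}2^{k-1}$, i.e. $\tfrac{3}{2}2^{k}\le t\le\tfrac{9}{8}2^{k}$, which is empty.

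Finally, to produce the open neighbourhood $U_t$, let $S(t)=\{k:t\in I_k^{*}\}$; by the previous step $|S(t)|\le 2$. I would argue that only finitely many indices $k$ lie ``close'' to $t$: since $I_k^{*}\subset[\tfrac{3}{4}2^{k},\tfrac{9}{4}2^{k}]$, the condition $\dist(t,I_k^{*})\le t/2$ pins $2^{k}$ inside a bounded range (roughly $[\tfrac{2t}{9},2t]$), leaving only finitely many candidate indices. For every $k\notin S(t)$, $t$ lies outside the closed set $I_k^{*}$, so $\dist(t,I_k^{*})>0$, and taking the minimum of these positive distances over the finitely many nearby indices outside $S(t)$ yields some $\delta>0$. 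Then $U_t:=(t-\delta,t+\delta)\cap(0,+\infty)$ meets only the (at most two) intervals indexed by $S(t)$, as required. No step poses a real obstacle; the only point requiring a moment of care is confirming that finitely many $I_k^{*}$ approach any given $t$, which follows from the geometric scaling of the endpoints.
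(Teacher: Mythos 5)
Your verification is correct. Note that the paper itself does not prove this proposition: it simply observes that it is a special case of the Whitney decomposition of an open set into cubes and cites Stein (Chapter VI), remarking that the one-dimensional situation is much simpler and that the constant $12$ there can be replaced by $2$. Your argument supplies the elementary self-contained proof that the paper leaves implicit: item (1) is literally the definition since $I_k^{*}=[\tfrac34 2^k,\tfrac94 2^k]$; item (2) follows from (1) via $2^{k-j}\le 3$, hence $|k-j|\le 1$; and for item (3) you first get the pointwise bound (no $t$ lies in three of the $I_k^{*}$, e.g.\ because $I_{k-1}^{*}\cap I_{k+1}^{*}=\emptyset$) and then upgrade it to a neighbourhood statement using that only finitely many $I_k^{*}$ come within $t/2$ of $t$. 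This buys a proof independent of the general Whitney covering machinery, at the cost of a page of routine computation. One small point worth making explicit in the last step: your $\delta$ is the minimum of $\dist(t,I_k^{*})$ over the nearby indices $k\notin S(t)$, and the argument needs $\delta\le t/2$ so that far-away intervals (those with $\dist(t,I_k^{*})>t/2$) are also avoided; this holds automatically because ``nearby'' means $\dist(t,I_k^{*})\le t/2$, but you should either note this, or simply define $\delta$ as the minimum of $t/2$ and those distances, which also covers the degenerate possibility that every nearby index already lies in $S(t)$.
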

This is a special case of the decomposition of an open set in Whitney's cubes, see \cite[Chapter VI]{Stein} for instance. In the one dimensional case things are much simpler and, for instance, it is easy to see that one may replace the number $N=12$ in \cite[Proposition VI.1.2, p. 169]{Stein} with the number 2. Anyhow, dealing with the number $12$ instead of $2$ would have no harmful effect in our proof.

In what follows we will not invoke the explicit definition of the intervals $I_k,$ $I_k^*$, as we will only need the properties mentioned in Proposition \ref{properties intervals}. {\bf Thus we may relabel the families $\lbrace I_k \rbrace_k$ and $\lbrace I_k^* \rbrace_k$, $k\in\Z$, as sequences indexed by $k\in\N$}, writing $\lbrace I_k \rbrace_{k\geq 1}$ and $\lbrace I_k^* \rbrace_{k \geq 1}.$ For every $k \geq 1,$ we will denote by $t_k$ and $\ell_k$ the midpoint and the length of $I_k$, respectively.

Next we recall how to define a Whitney partition of unity subordinated to the intervals $I_k^*$.
Let us take a bump function $\theta_0 \in C^\infty(\R)$ with $0 \leq \theta_0 \leq  1, \:\theta_0 = 1$ on $[-1/2,1/2];$ and $\theta_0 = 0$ on $\R\setminus (-\frac{3}{4},\frac{3}{4}).$ For every $k,$ we define the function $\theta_k$ by
$$
\theta_k(t)= \theta_0\left( \frac{t-t_k}{\ell_k} \right),  \quad t\in \R.
$$
It is clear that $\theta_k \in C^\infty(\R)$, that $0 \leq \theta_k \leq  1$, that $\theta_k = 1$ on $I_k$, and that $\theta_k = 0$ outside $\interior(I_k^*)$.

Now we consider the function $ \Phi = \sum_{k \geq 1} \theta_k$ defined on $(0,+\infty).$ Using Proposition \ref{properties intervals}, every point $t>0$ has an open neighbourhood which is contained in $(0,+\infty)$ and intersects at most two of the intervals $\lbrace I_k^{*} \rbrace_k.$ Since $\sop(\theta_k) \subset I_k^*,$ the sum defining $\Phi$ has only two terms and therefore $\Phi $ is of class $C^\infty.$ For the same reason, $\Phi (t)= \sum_{I_k^* \ni t} \theta_k(t) \leq 2,$ for $t>0.$ On the other hand, every $t>0$ must be contained in some $I_k,$ where the function $\theta_k$ takes the constant value $1,$ so we have $1 \leq \Phi \leq 2.$ These properties allow us to define, on $(0, +\infty),$ the functions $\theta_k^*= \frac{\theta_k}{\Phi}$. These are $C^\infty$ functions satisfying $\sum_k \theta_k^* =1$, $0\leq \theta_k^*\leq 1$, and $\sop(\theta_k^*) \subseteq I_k^*.$
Less elementary, but crucial, is the following property; see \cite{Whitney, Stein} for a proof.
\begin{proposition}\label{constants for derivatives}
For every $j \in \N \cup \lbrace 0 \rbrace,$ there is a positive constant $A_j$ such that
$$
|(\theta_k^*)^{(j)}(t) | \leq A_j \ell_k^{-j} \quad \text{for every} \quad t>0, \: k \in \N.
$$
\end{proposition}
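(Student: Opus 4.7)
The plan is to bound the derivatives of the numerator $\theta_k$ and the denominator $\Phi$ of $\theta_k^* = \theta_k/\Phi$ separately in terms of $\ell_k$, and then combine them via the Leibniz rule.

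First I would handle $\theta_k$. Since $\theta_k(t) = \theta_0\bigl((t-t_k)/\ell_k\bigr)$, the chain rule gives $\theta_k^{(j)}(t) = \ell_k^{-j}\,\theta_0^{(j)}\bigl((t-t_k)/\ell_k\bigr)$, and because $\theta_0$ is a fixed $C^\infty$ bump with bounded derivatives, one has $|\theta_k^{(j)}(t)|\leq C_j\,\ell_k^{-j}$ with $C_j:=\|\theta_0^{(j)}\|_\infty$ (a constant independent of $k$).

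Next I would estimate the derivatives of $\Phi$ at any point $t$ with $\theta_k(t)\neq 0$. By item (3) of Proposition \ref{properties intervals}, on a neighbourhood of such a $t$ the sum $\Phi=\sum_m \theta_m$ consists of at most two non-zero terms, corresponding to some indices $m$ for which $I_m^*\ni t$. For each such $m$, item (2) of the same Proposition yields $\tfrac12\ell_k\leq \ell_m\leq 2\ell_k$. Consequently, differentiating term by term,
$$
|\Phi^{(j)}(t)|\leq \sum_{I_m^*\ni t}|\theta_m^{(j)}(t)|\leq 2\,C_j\,(\ell_k/2)^{-j}=2^{j+1}C_j\,\ell_k^{-j}=:B_j\,\ell_k^{-j}.
$$
Combined with the global bound $1\leq \Phi\leq 2$, this is the raw material for estimating $1/\Phi$.

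Now I would show by induction on $j$ that $|(1/\Phi)^{(j)}(t)|\leq B_j'\,\ell_k^{-j}$ for some constant $B_j'$ independent of $k$ and $t$. The base case $j=0$ is immediate from $\Phi\geq 1$. For the inductive step, differentiating the identity $\Phi\cdot(1/\Phi)=1$ with the Leibniz rule gives
$$
\Phi(t)\,(1/\Phi)^{(j)}(t)=-\sum_{i=1}^{j}\binom{j}{i}\Phi^{(i)}(t)\,(1/\Phi)^{(j-i)}(t),
$$
so dividing by $\Phi(t)\geq 1$ and applying the inductive hypothesis together with the bound on $\Phi^{(i)}$ yields $|(1/\Phi)^{(j)}(t)|\leq B_j'\,\ell_k^{-j}$, as each term on the right has the correct scaling $\ell_k^{-i}\cdot \ell_k^{-(j-i)}=\ell_k^{-j}$. (Equivalently, one can invoke Faà di Bruno's formula applied to $g\circ\Phi$ with $g(x)=1/x$, since all quantities $\ell_m$ involved in a given $t$ are comparable to $\ell_k$.)

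Finally, applying the Leibniz rule one more time to $\theta_k^*=\theta_k\cdot(1/\Phi)$ gives
$$
|(\theta_k^*)^{(j)}(t)|\leq \sum_{i=0}^{j}\binom{j}{i}|\theta_k^{(i)}(t)|\,|(1/\Phi)^{(j-i)}(t)|\leq \sum_{i=0}^{j}\binom{j}{i}C_i B_{j-i}'\,\ell_k^{-j}=A_j\,\ell_k^{-j},
$$
which is the desired estimate (and the bound is vacuous when $\theta_k^*(t)=0$ since then $(\theta_k^*)^{(j)}(t)=0$ on a neighbourhood of $t$ except perhaps on $\partial I_k^*$, where one-sided derivatives agree). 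The main (very mild) obstacle is the inductive estimate for $1/\Phi$; everything else is a matter of tracking how all lengths $\ell_m$ that contribute at a given point are comparable to $\ell_k$, which is precisely what Proposition \ref{properties intervals} provides.
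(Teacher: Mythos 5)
Your proof is correct. The paper itself does not prove this proposition — it only cites Whitney and Stein — and your argument (the scaling bound $|\theta_k^{(j)}|\leq C_j\ell_k^{-j}$, the bounded-overlap plus comparable-lengths estimate for $\Phi^{(j)}$, and the inductive/Leibniz estimate for $1/\Phi$ using $\Phi\geq 1$) is essentially the standard proof given in those references. One cosmetic point: the clean dichotomy is $t\in I_k^*$ versus $t\notin\interior(I_k^*)$ (where all derivatives of $\theta_k^*$ vanish); your Leibniz estimate only needs $t\in I_k^*$, not $\theta_k(t)\neq 0$, and phrasing it that way avoids any issue at interior zeros of $\theta_0$.
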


\medskip

\subsection{The sequence $\lbrace \delta_p \rbrace_p$ and the function $\varepsilon$}\label{numbers delta and function epsilon}
Let us consider the numbers $r_m$ of Lemma \ref{inequalities rm}. We can easily construct a sequence $\lbrace \delta_p \rbrace_p$ of positive numbers satisfying
\begin{align*}
 \delta_p \leq &\min \left\lbrace r_{p+2} , \frac{1}{(p+2)!} \right\rbrace \quad \textrm{for } \:  p \geq 1, \\
& \delta_p < \frac{\delta_{p-1}}{2} \quad \textrm{for } \:  p \geq 2.
\end{align*}
Of course the sequence $\lbrace \delta_p \rbrace_p$ is decreasing to $0.$ Now, for every $k$ we define a positive integer $\gamma_k$ as follows. In the case that $\ell_k \geq \delta_1,$ we set $\gamma_k=1.$ In the opposite case, $\ell_k < \delta_1,$ we take $\gamma_k$ as the unique positive integer for which
$$
\delta_{\gamma_k+1} \leq \ell_k < \delta_{\gamma_k}.
$$
Finally let us define
$$
 \varepsilon(t)   :=  \left\lbrace
	\begin{array}{ccl}
	\sum_{k \geq 1} t^{\gamma_k} \theta_k^*(t) & \mbox{if } t>0, \\
	0  & \mbox{if }  t\leq 0.
	\end{array}
	\right.
$$
In the following lemma we show that $\varepsilon$ is of class $C^\infty$ on $\R$ and satisfies an additional technical property which will be important in Section \ref{subsectionconvexityneighbourhood}.  
\begin{lemma}\label{propertiesvarpesilon}
The function $\varepsilon$ satisfies the following properties.
\begin{itemize}
\item[$(1)$] $\varepsilon$ is of class $C^\infty(\R)$ and satisfies $\varepsilon^{(j)}(0)=0$ for every $j\in \N \cup \lbrace 0 \rbrace.$
\item[$(2)$] If $0<t\leq \delta_4 $ and $q\in \N$ are such that $\delta_{q+1} \leq t < \delta_q$ and $\frac{t}{2}\leq s \leq t,$ then $\varepsilon(2s) \geq t^{q+2}.$
\end{itemize}

\end{lemma}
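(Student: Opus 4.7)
Plan: For part (1) I would split according to location. For $t<0$, $\varepsilon\equiv 0$ is trivially $C^\infty$. For $t>0$, Proposition \ref{properties intervals}(3) gives every point a neighbourhood meeting at most two of the sets $I_k^{*}$; since $\sop(\theta_k^{*})\subseteq I_k^{*}$, $\varepsilon$ reduces locally to a finite sum of the $C^\infty$ functions $t\mapsto t^{\gamma_k}\theta_k^{*}(t)$, so $\varepsilon\in C^\infty(\R\setminus\{0\})$. The substance of (1) is smoothness at $0$. The plan is to prove that
\[
\lim_{t\to 0^{+}}\varepsilon^{(j)}(t)=0\qquad\text{for every } j\in\N\cup\{0\},
\]
and then to invoke the standard extension fact: any function which is identically $0$ on $(-\infty,0]$, of class $C^\infty$ on $(0,\infty)$, and whose one-sided derivatives of every order at $0^{+}$ exist and equal $0$, belongs to $C^\infty(\R)$ with $\varepsilon^{(j)}(0)=0$ for all $j$ (a routine induction using the mean value theorem).

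To establish the limit estimate, fix $t>0$; by local finiteness $\varepsilon^{(j)}(t)$ is a sum of at most two terms of the form $(t^{\gamma_k}\theta_k^{*}(t))^{(j)}$. Applying Leibniz's rule, the comparison $\tfrac{3}{4}\ell_k\leq t\leq\tfrac{9}{4}\ell_k$ from Proposition \ref{properties intervals}(1), and the derivative bound $|(\theta_k^{*})^{(j-i)}|\leq A_{j-i}\ell_k^{-(j-i)}$ from Proposition \ref{constants for derivatives}, yields an estimate of the shape
\[
\bigl|(t^{\gamma_k}\theta_k^{*}(t))^{(j)}\bigr|\leq B_j\,\gamma_k^{\,j}\,(9/4)^{\gamma_k}\,\ell_k^{\,\gamma_k-j},
\]
with $B_j$ depending only on $j$. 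The decisive point is that $\ell_k<\delta_{\gamma_k}\leq 1/(\gamma_k+2)!$, so the factorial in the denominator swamps both the polynomial $\gamma_k^{j}$ and the exponential $(9/4)^{\gamma_k}$. Since $t\to 0^{+}$ forces $\ell_k\to 0$ and hence $\gamma_k\to\infty$, the bound tends to $0$, which finishes (1).

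For (2), set $u=2s\in[t,2t]$ and choose $k$ with $u\in I_k$; then $\ell_k\leq u\leq 2\ell_k$, and $\theta_k^{*}(u)\geq 1/\Phi(u)\geq 1/2$ since $\theta_k(u)=1$. The key step is to bound $\gamma_k$ from above by $q+1$: from $\ell_k\geq u/2\geq t/2\geq\delta_{q+1}/2>\delta_{q+2}$ (the last inequality because $\delta_{q+2}<\delta_{q+1}/2$) and the defining inequality $\ell_k<\delta_{\gamma_k}$, strict monotonicity of $(\delta_p)$ gives $\gamma_k<q+2$. Since $u\leq 2\delta_4<1$ and $u\geq t$, we have $u^{\gamma_k}\geq u^{q+1}\geq t^{q+1}$, and since $t\leq\delta_4\leq 1/6!<1/2$,
\[
\varepsilon(2s)\geq u^{\gamma_k}\theta_k^{*}(u)\geq \tfrac{1}{2}\,t^{q+1}\geq t\cdot t^{q+1}=t^{q+2}.
\]

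The main obstacle is the derivative estimate at the origin in (1), where three competing quantities have to be balanced: the combinatorial factor $\gamma_k^{j}$ from Leibniz, the factor $(9/4)^{\gamma_k}$ from the crude comparison $t\leq(9/4)\ell_k$ on $I_k^{*}$, and the rapid decay $\ell_k^{\gamma_k-j}$. The calibration $\delta_p\leq 1/(p+2)!$ built into the construction of $(\delta_p)$ is precisely what makes the factorial decay win; once this estimate is in hand, part (2) is a short chase through the definitions.
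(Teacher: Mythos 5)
Your proof is correct and follows essentially the same route as the paper: local finiteness of the Whitney partition, Leibniz's rule with the bounds $|(\theta_k^*)^{(i)}|\le A_i\ell_k^{-i}$, the comparison $\frac{3}{4}\ell_k\le t\le\frac{9}{4}\ell_k$, and the calibration $\delta_p\le 1/(p+2)!$ to kill all derivatives at $0$, plus the interplay between $\gamma_k$ and the index $q$ for part 2. The only (harmless) deviations are that you prove $\varepsilon^{(j)}(t)\to 0$ and invoke the mean-value-theorem extension lemma where the paper proves $\lim_{t\to 0^+}\varepsilon^{(j)}(t)/t=0$ directly, and in part 2 you keep only the single term with $2s\in I_k$ (so $\theta_k^*(2s)\ge 1/2$ and $\gamma_k\le q+1$), absorbing the factor $1/2$ via $t<1/2$, whereas the paper uses $\sum_k\theta_k^*=1$ together with $\gamma_k\le q+2$ for every $k$ with $2s\in I_k^*$.
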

\begin{proof} For the first statement, we immediately see that $\varepsilon^{-1}(0)=(-\infty , 0],$ that $\varepsilon>0$ on $(0,+\infty)$ and that $\varepsilon \in C^\infty(\R\setminus\lbrace 0 \rbrace ).$ In order to prove the differentiability of $\varepsilon$ at $t=0$ and that all the derivatives of $\varepsilon$ at $t=0$ are $0,$ it is sufficient to show that for all $j \in \N\cup\{0\},$
$$
\lim_{t \to 0^+} \frac{|\varepsilon^{(j)}(t)|}{t}=0.
$$
To check this, fix $j\in \N\cup\{0\}$ and $\eta>0$ and take
$$
\widetilde{t_j} := \min \left\lbrace \frac{\eta}{ 2B_j 4^j (j+1)! }, \delta_{j+5} \right\rbrace, \quad \text{where} \quad B_j= \max \lbrace A_l \: : \: 0 \leq l \leq j \rbrace.
$$
Recall that the numbers $A_l$ are those given by Proposition \ref{constants for derivatives}. Let $0<t \leq \widetilde{t_j}$. Since $\lbrace \delta_p \rbrace_p$ is decreasing, we can find a unique positive integer $q$ such that $\delta_{q+1} \leq t < \delta_q$. Because $t\leq \delta_{j+5} < \delta_1$, we must have $q \geq j+4.$ Now, if $k$ is such that $t\in I_k^*,$ Proposition \ref{properties intervals} tells us that
$$
\ell_k \leq \frac{4}{3}t < 2t \leq 2\delta_{j+5} < \delta_1,
$$ and using the definition of $\gamma_k,$ we have
$$
\delta_{\gamma_k +1} \leq \ell_k \leq \frac{4}{3}t < 2t <2\delta_q < \delta_{q-1}.
$$
The above inequalities imply that $\gamma_k+1 >q-1,$ that is $\gamma_k \geq q-1.$ In particular $\gamma_k \geq j+3.$ On the other hand, using Proposition \ref{properties intervals} again, we obtain
$$
\delta_{\gamma_k} > \ell_k \geq \frac{4t}{9}\geq \frac{t}{4} \geq \frac{\delta_{q+1}}{4} > \delta_{q+3},
$$
and hence $\gamma_k \leq q+2.$

If we use Leibniz's Rule, we obtain
$$
\varepsilon^{(j)}(t) = \sum_{k \geq 1} \sum_{l=0}^j \binom{j}{l} \frac{d^l}{dt^l}(t^{\gamma_k}) (\theta_k^*)^{(j-l)}(t),
$$
and since $\gamma_k \geq j+3$ if $t\in I_k^*,$ we can write
$$ \frac{|\varepsilon^{(j)}(t)|}{t} = \Bigg| \sum_{I_k^* \ni t} \sum_{l=0}^j \binom{j}{l} \frac{\gamma_k!}{(\gamma_k-l)!}t^{\gamma_k-l-1} (\theta_k^*)^{(j-l)}(t) \Bigg| \leq  \sum_{I_k^* \ni t} \sum_{l=0}^j j! \:  \gamma_k ! \: t^{\gamma_k-l-1} A_{j-l} \ell_k^{l-j}.
$$
Now, by Proposition \ref{properties intervals} we know that  $\ell_k \geq \frac{4}{9}t \geq \frac{1}{4}t.$ Moreover, because $\gamma_k \leq q+2, $ we have $\gamma_k! \leq (q+2)!$ and the last sum is smaller than or equal to
$$
\sum_{I_k^* \ni t} \sum_{l=0}^j j! \:  (q+2)! \: t^{\gamma_k-l-1} \: A_{j-l} \: \frac{t^{l-j}}{4^{l-j}}.
$$
Writing $t^{\gamma_k-l-1} = t^2 t^{\gamma_k-l-3} \leq t \: \delta_q \: t^{\gamma_k-l-3}$, this sum is smaller than or equal to
$$
\sum_{I_k^* \ni t} \sum_{l=0}^j j! \,  (q+2)! \, t \: \delta_q \, t^{\gamma_k-l-3}\, A_{j-l} \, \frac{t^{l-j}}{4^{l-j}} \leq 
\left( 4^j j! B_j \sum_{I_k^* \ni t} \sum_{l=0}^j (q+2)! \delta_q \, t^{\gamma_k-j-3} \right) t .
$$
Bearing in mind that $t \leq \delta_{j+5} < 1$ and $\gamma_k \geq j+3,$ we must have $t^{\gamma_k-j-3} \leq 1.$ By construction of the sequence $\lbrace \delta_p \rbrace_p$ we have that $(q+2)! \: \delta_q \leq 1$, and using that the sum $\sum_{I_k^* \ni t} $ has at most $2$ terms, we obtain
$$
\frac{|\varepsilon^{(j)}(t)|}{t} \leq 4^j (j+1) j!  \: 2B_j  t  \leq 4^j (j+1)! \: 2B_j  \widetilde{t_j} \leq \eta.
$$
This completes the proof of $(1).$

Let us now prove $(2).$ First of all, observe that $\delta_{q+1} \leq t \leq 2s \leq 2t < 2\delta_q < \delta_{q-1}$, and in particular $q\geq 3.$ Let us suppose that $2s \in I_k^*.$ Using Proposition \ref{properties intervals},
$$
\delta_{\gamma_k+1} \leq \ell_k \leq \frac{4}{3} (2s) <2(2s)<2 \delta_{q-1}< \delta_{q-2},
$$
that is $\gamma_k \geq q-2.$ If we use Proposition \ref{properties intervals} again,
$$
\delta_{\gamma_k} > \ell_k \geq \frac{4 (2s)}{9} \geq \frac{(2s)}{4} \geq \frac{\delta_{q+1}}{4} >\delta_{q+3},
$$
and then $\gamma_k\leq q+2.$

Finally, notice that $2s \leq 2t < \delta_{q-1}< \delta_1 <1$, and due to the fact that $\gamma_k\leq q+2$ if $2s\in I_k^*,$ we have that $(2s)^{q+2} \leq (2s)^{\gamma_k}.$ Therefore we obtain the desired inequality,
$$
t^{q+2} \leq (2s)^{q+2}  = \sum_{I_k^*\ni 2s} (2s)^{q+2} \theta_k^*(2s) \leq \sum_{I_k^*\ni 2s} (2s)^{\gamma_k} \theta_k^*(2s) =\varepsilon(2s).
$$
\end{proof}

\subsection{The function $\varphi$} \label{sectioncontructionvarphi} Next we will modify the function constructed by Ghomi in \cite[Proposition 2.1]{GhomiBLMS2004} so that we can obtain fine quantitative estimates for its Hessian which compensate the lack of convexity of $f$ off of $C$ (see Lemma \ref{convexity in an ball} below). We begin by defining
$$
 \tilde{\varepsilon}(t)   =  \left\lbrace
	\begin{array}{ccl}
	\dfrac{\varepsilon(2t)}{t^{n+3}} & \mbox{ if }  t>0 \\
	0  & \mbox{ if }  t\leq 0.
	\end{array}
	\right.
$$
Since $\varepsilon\in C^\infty(\R)$, with $\varepsilon^{(j)}(0)=0$ for all $j\in\N\cup\{0\}$, we have that $ \tilde{\varepsilon}\in C^\infty(\R)$ and $\tilde\varepsilon^{(j)}(0)=0$ for all $j\in\N\cup\{0\}$ as well. Now, let us consider the function
$$
g(t)   =  \left\lbrace
	\begin{array}{ccl}
	\int_0^t \int_0^s  \tilde{\varepsilon}(r)dr \: ds & \mbox{ if }  t>0 \\
	0  & \mbox{if }  t\leq 0.
	\end{array}
	\right.
$$
It is clear that $g\in C^\infty(\R)$ and $g^{(j)}(0)=0$ for all $j\in \N \cup \{ 0 \}.$ In addition, $g^{-1}(0)=(-\infty ,0]$ and $g''(t)= \tilde{\varepsilon}(t)>0$ for all $t>0.$ In particular, $g$ is convex on $\R$ and positive, with a strictly positive second derivative, on $(0,+\infty).$ We may assume that $0\in C$. 

Now, for every vector $w\in \Sn,$ define $h(w)= \max_{z\in C} \langle z,w \rangle$, the support function of $C$ (for information about support functions of convex sets, see \cite{Rockafellar} for instance). We also define the function
 $$
\begin{array}{rccl}
\phi : &\Sn \times \Rn & \longrightarrow & \R  \\
   &(w, x) & \longmapsto & \phi(w,x) = g(\langle x, w \rangle -h(w)).
\end{array}
$$
It is easy to see that, for every $w=(w_1,\ldots,w_n) \in \Sn$ and every multi-index $\alpha,$ we have
$$
\frac{\partial^\alpha}{\partial x^\alpha} \phi ( w, x) = g^{(|\alpha|)}(\langle x, w \rangle -h(w)) w^\alpha, \quad \text{where} \quad w^\alpha=w_1^{\alpha_1} \cdots w_n^{\alpha_n}.
$$
Also observe that $\langle x ,w \rangle \leq h(w)$ for every $x\in C,$ $w\in \Sn.$ Therefore, the properties of $g$ and its derivatives imply that $\phi(w, \cdot)$ is a function of class $C^\infty(\Rn)$ whose derivatives of every order and itself vanish on $C,$ for every $w\in \Sn.$ It is also easy to check that the function $\phi(w,\cdot)$, being a composition of a convex function with a non-decreasing convex function, is convex as well.

Finally, we define the function $\varphi : \Rn \to \R$ as follows:
$$
\varphi(x) = \int_{\Sn} \phi(w,x) \: dw \quad \text{for every} \quad x\in \Rn.
$$
Again it is easy to check that $\varphi^{-1}(0)=C$ and $\varphi$ is convex. Because $\phi(w,\cdot)$ is of class $C^\infty(\Rn)$, the derivatives $(w,x)\mapsto \frac{\partial^{\alpha}}{\partial x^{\alpha}}\phi(w,x)$ are continuous for every multi-index $\alpha$, and because $\mathbb{S}^{n-1}$ is compact, it follows from standard results on differentiation under the integral sign that the function $\varphi$ is of class $C^\infty(\R^n)$ as well and that $\partial^\alpha \varphi (y)=0$ for every $y\in C$ and every multi-index $\alpha.$ In other words, $J_y ^m \varphi =0$ for all $m\in \N \cup \lbrace 0 \rbrace$ and all $y\in C.$ One can also check easily that
$$
D^2\varphi(x)(v^2) = \int_{\Sn} g'' ( \langle x,w \rangle-h(w) ) \langle w,v \rangle^2 \: dw  \quad \text{for every} \quad x\in  \R^n, \: v\in \Sn.
$$

\subsection{Selection of angles and directions}\label{sectionangles} Given $x\in\R^n\setminus C$ and $v\in\mathbb{S}^{n-1}$ we will now find a region $W=W(x,v)$ of $\mathbb{S}^{n-1}$ of sufficient volume (depending only, and conveniently, on $d(x,C)$) on which we have good lower estimates for $g'' ( \langle x,w \rangle-h(w) ) \langle w,v \rangle^2$. This will involve a careful selection of angles and directions.

Fix a point $x\in\R^n\setminus C$, let $x_C$ be the metric projection of $x$ onto the compact convex $C$, and define
\begin{equation}\label{definitionsprojectionnormalangle}
u_x:=\frac{1}{|x-x_C|}(x-x_C)\quad \text{and} \quad \alpha_x:= \frac{d(x,C)}{d(x,C)+ \diam(C)}.
\end{equation}
\begin{lemma}\label{choice of alpha}
We have that $d(x,C)=\langle x,u_x \rangle - h(u_x) $ and
$$
d(x, C)\geq \langle x, w \rangle -h(w) \geq \frac{1}{2}d(x,C)
$$
for all $w\in \Sn$ such that $\widehat{ w \: u_x} \in \left[ \frac{\alpha_x}{3}, \frac{\alpha_x}{2} \right]$.
\end{lemma}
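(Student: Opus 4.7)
The first identity follows immediately from standard properties of the metric projection onto a convex set: I would observe that the vector $u_x = (x-x_C)/|x-x_C|$ is an outward normal to $C$ at $x_C$ (equivalently, $x_C$ is the maximizer of $\langle \cdot, u_x\rangle$ on $C$, as otherwise some point of $C$ would lie closer to $x$ than $x_C$). Consequently $h(u_x) = \langle x_C, u_x\rangle$, so $\langle x, u_x\rangle - h(u_x) = \langle x - x_C, u_x\rangle = |x-x_C| = d(x,C)$.

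For the second assertion, set $t := d(x,C)$ and $D := \diam(C)$, and for $w\in\mathbb{S}^{n-1}$ with $\widehat{w\,u_x} = \theta \in [\alpha_x/3,\,\alpha_x/2]$ write $w = \cos\theta\, u_x + \sin\theta\, v$ for some unit vector $v\perp u_x$. I would use throughout the identity
\[
\langle x, w\rangle - h(w) = \min_{z\in C}\langle x-z, w\rangle.
\]
The upper bound is immediate: taking $z = x_C$ yields $\langle x, w\rangle - h(w) \leq \langle x - x_C, w\rangle = t\cos\theta \leq t$.

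For the lower bound, for each $z\in C$ I would split
\[
\langle x-z, w\rangle = \langle x - x_C, w\rangle + \langle x_C - z, w\rangle = t\cos\theta + \cos\theta\,\langle x_C-z, u_x\rangle + \sin\theta\,\langle x_C-z, v\rangle.
\]
The middle term is nonnegative by Part 1 (since $x_C$ maximizes $\langle \cdot, u_x\rangle$ on $C$), while $|\langle x_C - z, v\rangle| \leq |x_C - z| \leq D$ and $\sin\theta \leq \theta$. Hence
\[
\langle x-z, w\rangle \geq t\cos\theta - \theta D.
\]
Minimizing over $z\in C$ reduces the problem to the scalar inequality $t\cos\theta - \theta D \geq t/2$.

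The only remaining (and mildly delicate) step is to verify this scalar inequality for $\theta \in [\alpha_x/3,\alpha_x/2]$. Since the function $\theta \mapsto t\cos\theta - \theta D$ is decreasing on $[0,\pi/2]$, it suffices to check it at $\theta = \alpha_x/2 = t/(2(t+D))$. Using $\cos\theta \geq 1 - \theta^2/2$, the inequality reduces, after multiplying by $2/t$, to
\[
\frac{t^2}{4(t+D)^2} + \frac{D}{t+D} \leq 1,
\]
which rearranges (via $D/(t+D) = 1 - t/(t+D)$) to $t/(4(t+D)) \leq 1$. This is automatic, and this is the only place where the precise choice of the denominator $d(x,C) + \diam(C)$ in $\alpha_x$ really matters: it is tailored so that $\theta D$ is dominated by the $t\cos\theta$ term even when $t$ is much smaller than $D$. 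I expect this calibration to be the main subtle point; the rest of the argument is geometric bookkeeping about the projection $x_C$ and the decomposition of $w$.
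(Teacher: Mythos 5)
Your proof is correct, and it reaches the bound by a somewhat different route than the paper. The paper picks a support point $\xi\in C$ with $h(w)=\langle \xi,w\rangle$, writes $\langle x,w\rangle-h(w)\ge \langle x-\xi,w-u_x\rangle+|x-x_C|$, and bounds the error term by $(\diam(C)+|x-x_C|)\,\theta$ using the chord--arc inequality $|w-u_x|\le\theta$; with $\theta\le\alpha_x/2$ this gives exactly $-(D+t)\frac{t}{2(t+D)}+t=\frac t2$ with no trigonometric estimates at all. You instead use $\langle x,w\rangle-h(w)=\min_{z\in C}\langle x-z,w\rangle$, decompose $w=\cos\theta\,u_x+\sin\theta\,v$, exploit the projection inequality $\langle x_C-z,u_x\rangle\ge 0$ to keep the exact term $t\cos\theta$, and pay only $\theta D$ (rather than $\theta(t+D)$) in error, at the cost of a final scalar verification of $t\cos\theta-\theta D\ge t/2$ via $\cos\theta\ge 1-\theta^2/2$; your checks there ($1\ge\theta^2+2\theta D/t$ at $\theta=\alpha_x/2$, reducing to $t/(4(t+D))\le 1$) are all correct, as is your derivation of $\langle x,u_x\rangle-h(u_x)=d(x,C)$ from the obtuse-angle property of the metric projection (which the paper simply calls straightforward). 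Your estimate is in fact marginally sharper, while the paper's choice of $\alpha_x$ is calibrated so that its cruder error bound closes with exact arithmetic; both arguments are valid, and the only hypothesis you use implicitly that is worth making explicit is $\cos\theta\ge 0$, which holds since $\theta\le\alpha_x/2\le 1/2<\pi/2$.
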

\noindent Here $\widehat{ w \: u_x}$ denotes the length of the shortest geodesic (or angle) between $w$ and $u_x$ in $\mathbb{S}^{n-1}$.
\begin{proof} The fact that $\langle x,u_x \rangle - h(u_x)  = d(x,C)$ is a straightforward consequence of the definition of $h$ and $u_x$. For the second part,
given $w\in \Sn$ with $\widehat{ w \: u_x} \in \left[ \frac{\alpha_x}{3}, \frac{\alpha_x}{2} \right]$, let us denote $\theta=\widehat{ w \: u_x}$. Since $C$ is compact, we can find $z \in C$ such that $h(w)=\langle z,w\rangle.$ Using that $\langle x,u_x \rangle - h(u_x) =|x-x_C|$ and $|w-u_x|\leq \theta$, we have
\begin{align*}
 \langle x, w \rangle -h(w) & = \langle x,w-u_x \rangle+ |x-x_C|+h(u_x)-h(w) \\
& \geq  \langle x,w-u_x \rangle+ |x-x_C|+\langle z, u_x-w\rangle\\
&  =  \langle x-z, w-u_x\rangle +|x-x_C| \\
& \geq -\left(\textrm{diam}(C)+|x-x_C|\right)\theta+|x-x_C| \\
& \geq -\left(\textrm{diam}(C)+|x-x_C|\right)\frac{\alpha_x}{2}+|x-x_C| \\
& = \frac{1}{2}|x-x_C|.
\end{align*}
The other inequality, $d(x, C)\geq \langle x, w \rangle -h(w)$, follows immediately from the definition of $h.$ 
\end{proof}
Next we find the region $W$ we need.
\begin{lemma} \label{choice of w}
Given any $v\in \Sn$ with $\langle u_x, v \rangle \geq 0,$ there exists a vector $w_0=w_0(x,v) \in \Sn$ such that the set
$$
W:= \lbrace w\in \Sn \: : \: \widehat{ w \: w_0} \in [0,\tfrac{\alpha_x}{12} ] \rbrace
$$
satisfies the following.
\begin{enumerate}
\item[$(1)$] For every $w\in W$ we have $\widehat{ u_x \: w} \in \left[ \frac{\alpha_x}{3}, \frac{\alpha_x}{2} \right].$
\item[$(2)$] For every $w\in W$ we have $\langle w, v \rangle \geq \sin(\frac{\alpha_x}{3}).$
\item[$(3)$] $\vol_{\Sn}(W) \geq V(n) \alpha_x^{n-1}$, where $V(n)>0$ is a constant depending only on the dimension $n.$
\end{enumerate}
\end{lemma}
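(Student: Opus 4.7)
The plan is to place $w_0$ on the great circle through $u_x$ and $v$ at angular distance exactly $5\alpha_x/12$ from $u_x$, on the side of $v$. Concretely, let $e\in\Sn$ be the unit vector in the $2$-plane $\operatorname{span}(u_x,v)$ orthogonal to $u_x$ with $\langle e,v\rangle\geq 0$ (choosing $e$ to be an arbitrary unit vector orthogonal to $u_x$ if $v=u_x$), and set
$$
w_0:=\cos(5\alpha_x/12)\,u_x+\sin(5\alpha_x/12)\,e.
$$
All three conditions will then reduce to routine spherical triangle-inequality estimates. The only step that requires a little care is verifying condition (2) in the degenerate case $\beta:=\widehat{u_x\,v}<5\alpha_x/12$, and this is handled by the crude bound $\alpha_x<1$ coming from the definition of $\alpha_x$.

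For condition (1), the triangle inequality for the arc-length metric on $\Sn$ gives $|\widehat{u_x\,w}-\widehat{u_x\,w_0}|\leq \widehat{w\,w_0}\leq \alpha_x/12$ for every $w\in W$. Since $\widehat{u_x\,w_0}=5\alpha_x/12$ by construction, this immediately yields $\widehat{u_x\,w}\in[\alpha_x/3,\alpha_x/2]$.

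For condition (2), the hypothesis $\langle u_x,v\rangle\geq 0$ gives $\beta=\widehat{u_x\,v}\in[0,\pi/2]$. Because $w_0$, $u_x$, $v$ are coplanar and $w_0$ lies on the geodesic issuing from $u_x$ toward $v$, one has $\widehat{w_0\,v}=|\beta-5\alpha_x/12|$. If $\beta\geq 5\alpha_x/12$ this is trivially at most $\pi/2-5\alpha_x/12$; if $\beta<5\alpha_x/12$ it is at most $5\alpha_x/12$, which is also bounded by $\pi/2-5\alpha_x/12$ since $5\alpha_x/6<1<\pi/2$. Then for any $w\in W$,
$$
\widehat{w\,v}\leq \widehat{w\,w_0}+\widehat{w_0\,v}\leq \frac{\alpha_x}{12}+\frac{\pi}{2}-\frac{5\alpha_x}{12}=\frac{\pi}{2}-\frac{\alpha_x}{3},
$$
whence $\langle w,v\rangle=\cos(\widehat{w\,v})\geq \cos(\pi/2-\alpha_x/3)=\sin(\alpha_x/3)$.

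For condition (3), I would use the standard spherical-cap volume formula
$$
\vol_{\Sn}(W)=\omega_{n-2}\int_{0}^{\alpha_x/12}\sin^{n-2}(\theta)\,d\theta,
$$
where $\omega_{n-2}$ denotes the $(n-2)$-volume of $\mathbb{S}^{n-2}$, with the usual convention $\omega_{0}=2$. Since $\alpha_x/12\leq \pi/2$, Jordan's inequality $\sin\theta\geq (2/\pi)\theta$ on $[0,\pi/2]$ yields
$$
\vol_{\Sn}(W)\geq \frac{\omega_{n-2}\,(2/\pi)^{n-2}}{(n-1)\,12^{n-1}}\,\alpha_x^{n-1},
$$
which is the required bound with $V(n):=\omega_{n-2}(2/\pi)^{n-2}/((n-1)\,12^{n-1})$ depending only on $n$. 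No step constitutes a real obstacle: once $w_0$ is chosen as above, the whole argument is elementary spherical trigonometry.
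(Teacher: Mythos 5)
Your proposal is correct and follows essentially the same route as the paper: you place $w_0$ on the great circle through $u_x$ and $v$ at angular distance $5\alpha_x/12$ from $u_x$ on the side of $v$, deduce (1) and (2) by the triangle inequality for the geodesic metric (with the bound $\widehat{w_0\,v}\leq\pi/2-5\alpha_x/12$, which you verify explicitly where the paper merely states it, and with the two cases $v\neq u_x$, $v=u_x$ unified), and obtain (3) from the spherical-cap volume formula with a Jordan-type lower bound for $\sin\theta$. The only differences are cosmetic (an explicit formula for $w_0$ and a slightly different constant $V(n)$), so no further comment is needed.
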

\begin{proof} We prove $(1)$ and $(2)$ at the same time by studying two cases separately.

\noindent \textbf{Case 1.} $u_x \neq v.$ Take an $w_0$ in the unit circle of the plane spanned by the vectors $u_x$ and $v$, in such a way that $\widehat{w_0 \: u_x}=\frac{5\alpha_x}{12}$, and that the arc in that circle joining $u_x$ with $w_0$ has the same orientation as the arc joining $u_x$ with $v.$ Set $W= \lbrace w\in \mathbb{S}^{n-1} \: : \: \widehat{w\:  w_0} \in [0,\frac{\alpha_x}{12} ] \rbrace$ and let $w\in W$. Bearing in mind that the angles shorter than $\pi$ give the usual intrinsic distance in $\mathbb{S}^{n-1}$, we may use the triangle inequality for this distance in order to estimate
$$
\widehat{u_x\: w} \leq \widehat{u_x\: w_0}+ \widehat{w_0\:w} \leq \frac{5\alpha_x}{12} + \frac{\alpha_x}{12}= \frac{\alpha_x}{2},
$$
$$
\widehat{u_x\: w} \geq \widehat{u_x\:  w_0}- \widehat{w_0 \:w} \geq \frac{5\alpha_x}{12} - \frac{\alpha_x}{12}= \frac{\alpha_x}{3},
$$
that is, $\widehat{u_x\: w} \in \left[ \frac{\alpha_x}{3},\frac{\alpha_x}{2} \right].$ It only remains to see that $\langle w, v \rangle \geq \sin ( \frac{\alpha_x}{3} )$ for all $w\in W.$ It is easy to verify $ \widehat{v \: w_0} \leq \frac{\pi}{2}-\frac{5\alpha_x}{12}$ and, then, for every $w\in W,$ we have
$$
\widehat{w \: v} \leq \widehat{w \: w_0}+ \widehat{w_0 \: v} \leq \frac{\alpha_x}{12} + \frac{\pi}{2}- \frac{5\alpha_x}{12} = \frac{\pi}{2}- \frac{\alpha_x}{3}.
$$
Therefore $\langle w,v \rangle = \cos( \widehat{w\: v} ) \geq \cos( \frac{\pi}{2}-\frac{\alpha_x}{3} ) = \sin ( \frac{\alpha_x}{3} ).$

\noindent \textbf{Case 2.} $ u_x=v.$ Pick $w_0 \in \mathbb{S}^{n-1}$ such that $\widehat{w_0 \: u_x}=\frac{5\alpha_x}{12}.$ With the same estimations as in Case 1 we obtain $\widehat{u_x \:  w} \in [\frac{\alpha_x}{3},\frac{\alpha_x}{2}],$ and hence $\langle w , v \rangle = \langle w , u_x \rangle \geq \cos \left( \frac{\alpha_x}{2} \right) \geq \sin \left( \frac{\alpha_x}{3} \right)$ for every $w\in W.$ 

\medskip

\noindent Let us now prove $(3).$  The set $W$ is a hyperspherical cap, and its volume is given by
$$
 \vol_{\Sn}(W)  = \vol(\mathbb{S}^{n-2}) \int_0^{\alpha_x/12} \sin^{n-2}(\beta) d\beta,
$$
where $\vol(\mathbb{S}^{n-2})=2$ in the special case $n=2.$ For those angles $\beta$ such that $0 \leq \beta \leq \frac{\alpha_x}{12} \leq \frac{\pi}{3},$ it is clear that $\sin\beta \geq \frac{1}{2} \beta,$ and therefore
$$
 \vol_{\Sn}(W) \geq \vol(\mathbb{S}^{n-2})  \int_0^{\alpha_x/12} (\tfrac{1}{2} \beta)^{n-2} d\beta = V(n) \alpha_x^{n-1}; \quad \text{where}
$$
$$
 V(n)=\dfrac{\vol(\mathbb{S}^{n-2})}{12(n-1) (24)^{n-2}} \quad \text{for every} \quad n \geq 2. 
$$ 
\end{proof}

\subsection{Convexity of $f+\psi$ on a neighbourhood of $C$}\label{subsectionconvexityneighbourhood}
Now, using the constant $V(n) $ obtained in Lemma \ref{choice of w} $(3)$, define $$C(n):= \frac{V(n)}{36(1+\diam(C))^{n+1}}.$$
\begin{lemma} \label{convexity in an ball}
With the notation of Section \ref{numbers delta and function epsilon}, let us define $r:=\delta_4$ and consider the function $H=f+\frac{2}{C(n)} \varphi$ defined on $\R^n.$ Then, for every $x \in\R^n\setminus C$ such that $t:=d(x,C) \leq r$, and for every $v\in \Sn,$ we have
$$
D^2H(x)(v^2) \geq  t^q,
$$
where $q$ is the unique positive integer such that $\delta_{q+1} \leq t < \delta_q.$
\end{lemma}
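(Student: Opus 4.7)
The strategy is to split $D^2H(x)(v^2) = D^2 f(x)(v^2) + \tfrac{2}{C(n)} D^2\varphi(x)(v^2)$ and show that the first term is bounded below by $-t^q$, while the second is at least $2t^q$. The first bound is immediate from Lemma~\ref{inequalities rm} applied with $m=q+2$: since $t<\delta_q\leq r_{q+2}$ by construction of the sequence $\{\delta_p\}$, we obtain $D^2 f(x)(v^2)\geq -t^{q}$. The substantial content lies in proving
\[
D^2\varphi(x)(v^2)\;\geq\; C(n)\, t^{q}.
\]

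To do this, I would first observe that $\langle w,v\rangle^2=\langle w,-v\rangle^2$, so we may assume $\langle u_x,v\rangle\geq 0$ and apply Lemma~\ref{choice of w} to obtain the region $W=W(x,v)\subset\mathbb{S}^{n-1}$. Since $g''\geq 0$ everywhere, we can restrict the integral defining $D^2\varphi(x)(v^2)$ to $W$ and obtain
\[
D^2\varphi(x)(v^2)\;\geq\;\int_W g''(\langle x,w\rangle - h(w))\,\langle w,v\rangle^2\,dw.
\]
Now I would estimate each factor on $W$. By Lemmas~\ref{choice of alpha} and \ref{choice of w}(1), for every $w\in W$ the quantity $s:=\langle x,w\rangle-h(w)$ lies in $[t/2,\,t]$. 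By Lemma~\ref{choice of w}(2) and the elementary bound $\sin\beta\geq\beta/2$ for $\beta\in[0,\pi/2]$, we get $\langle w,v\rangle^2\geq \sin^2(\alpha_x/3)\geq \alpha_x^2/36$.

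The key point is controlling $g''(s)=\tilde\varepsilon(s)=\varepsilon(2s)/s^{n+3}$ from below. Since $t\leq\delta_4$ and $t/2\leq s\leq t$, Lemma~\ref{propertiesvarpesilon}(2) (with the same $q$ determined by $\delta_{q+1}\leq t<\delta_q$) gives $\varepsilon(2s)\geq t^{q+2}$; combining with $s\leq t$ in the denominator yields $g''(s)\geq t^{q+2}/t^{n+3}=t^{q-n-1}$. Putting the three estimates together with the volume bound $\vol_{\mathbb{S}^{n-1}}(W)\geq V(n)\alpha_x^{n-1}$ of Lemma~\ref{choice of w}(3), we arrive at
\[
D^2\varphi(x)(v^2)\;\geq\; t^{q-n-1}\cdot\frac{\alpha_x^2}{36}\cdot V(n)\,\alpha_x^{n-1}
\;=\;\frac{V(n)}{36}\, t^{q-n-1}\,\alpha_x^{n+1}.
\]
Finally, since $t\leq\delta_4\leq 1$, the inequality $\alpha_x=t/(t+\diam(C))\geq t/(1+\diam(C))$ gives $\alpha_x^{n+1}\geq t^{n+1}/(1+\diam(C))^{n+1}$, and the powers of $t$ telescope to produce exactly $C(n)\,t^q$. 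Adding back the estimate for $D^2f$ yields the conclusion.

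The main obstacle is purely bookkeeping: making sure that the exponents $q+2$ from the size of $\varepsilon(2s)$, $n+3$ from the denominator of $\tilde\varepsilon$, and $n+1$ from the combined angular and volume factor $\alpha_x^{n+1}$ cancel to produce precisely $t^q$. The choice of the powers $n+3$ in the definition of $\tilde\varepsilon$ and $q+2$ in the construction of $\varepsilon$ was made so that this cancellation occurs; the constant $C(n)$ was engineered so that the resulting coefficient absorbs all the geometric factors.
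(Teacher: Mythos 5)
Your proposal is correct and follows essentially the same route as the paper's proof: the same splitting $D^2H = D^2f + \tfrac{2}{C(n)}D^2\varphi$ with Lemma \ref{inequalities rm} (for $m=q+2$, using $t<\delta_q\leq r_{q+2}$) giving $D^2f(x)(v^2)\geq -t^q$, and the same chain of estimates on $W(x,v)$ — namely $s=\langle x,w\rangle-h(w)\in[t/2,t]$, Lemma \ref{propertiesvarpesilon}(2) giving $\tilde\varepsilon(s)\geq t^{q+2}/t^{n+3}$, $\langle w,v\rangle^2\geq\sin^2(\alpha_x/3)\geq\alpha_x^2/36$, the volume bound $V(n)\alpha_x^{n-1}$, and $\alpha_x\geq t/(1+\diam(C))$ — yielding $D^2\varphi(x)(v^2)\geq C(n)t^q$ and hence $D^2H(x)(v^2)\geq t^q$. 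All hypotheses of the cited lemmas are verified exactly as in the paper, so there is nothing to add.
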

\begin{proof}
Fix $x, t, v, q$ as in the statement. Since $D^2H(x)(v^2)= D^2H(x)((-v)^2)$, we may suppose that $ \langle v, u_x \rangle \geq 0,$ where $u_x$ is that of \eqref{definitionsprojectionnormalangle}. Consider the set $W=W(x,v)$ as in Lemma \ref{choice of w}. By the construction of $\varphi,$ we have
\begin{equation}\label{second differential of varphi}
  D^2\varphi(x)(v^2)= \int_{\Sn} \tilde{\varepsilon} ( \langle x,w \rangle-h(w) ) \langle w,v \rangle^2 \: dw
 \geq \int_{W} \tilde{\varepsilon} ( \langle x,w \rangle-h(w) ) \langle w,v \rangle^2 \: dw >0.
\end{equation}
For every $w\in W,$ Lemma \ref{choice of w} $(1)$ says that $ \widehat{ w \: u_x} \in \left[ \frac{\alpha_x}{3}, \frac{\alpha_x}{2} \right];$ where $\alpha_x$ is that of \eqref{definitionsprojectionnormalangle}. According to Lemma \ref{choice of alpha}, we have
$$
\frac{t}{2} \leq \langle x,w \rangle-h(w) \leq t \leq \delta_4.
$$
Thus we obtain from Lemma \ref{propertiesvarpesilon} $(2)$ that
\begin{equation}\label{inequalitydistanceq-n-1}
\tilde{\varepsilon} ( \langle x,w \rangle-h(w) ) = \dfrac{\varepsilon(2(\langle x,w \rangle-h(w)))}{(\langle x,w \rangle-h(w))^{n+3}} \geq \dfrac{t^{q+2}}{(\langle x,w \rangle-h(w))^{n+3}} \geq \dfrac{t^{q+2}}{t^{n+3}} =  \dfrac{t^{q}}{t^{n+1}}.
\end{equation}
On the other hand, $\langle w,v \rangle \geq \sin(\frac{\alpha_x}{3})$ for every $w\in W,$ by virtue of Lemma \ref{choice of w} $(2)$. By combining the inequalities \eqref{second differential of varphi} and \eqref{inequalitydistanceq-n-1}, we get
\begin{equation}\label{estimationhessianwithW}
D^2\varphi(x)(v^2) \geq \dfrac{t^{q}}{t^{n+1}} \sin^2 \left( \frac{\alpha_x}{3} \right) \vol_{\Sn}(W).
\end{equation}
Since $\alpha_x \leq 1,$ we obviously we have $\sin(\frac{\alpha_x}{3} )\geq \frac{\alpha_x}{6}.$ Also, bearing in mind that $t\leq r = \delta_4 <1,$ it is clear that
$$
\alpha_x= \frac{t}{t+\diam(C)} \geq \frac{t}{1+\diam(C)}.
$$
In addition, Lemma \ref{choice of w} $(3)$ gives us $ \vol_{\Sn}(W) \geq  V(n) \alpha^{n-1}_x.$ Combining these inequalities with \eqref{estimationhessianwithW} we obtain
\begin{equation}\label{estimationhessianwithCn}
D^2\varphi(x)(v^2) \geq \dfrac{t^{q}}{t^{n+1}} \frac{t^{n+1}}{36 (1+ \diam(C))^{n+1} } V(n) = C(n) t^q .
\end{equation}
Finally, by the construction of the sequence $\lbrace \delta_p \rbrace_p,$ (see Section \ref{numbers delta and function epsilon}) we have $d(x,C) = t < \delta_q \leq r_{q+2}.$ According to Lemma \ref{inequalities rm}, we obtain the inequality
$$
D^2f(x)(v^2) \geq - t^q;
$$
which, in combination with \eqref{estimationhessianwithCn}, yields
$$
D^2H(x)(v^2) = D^2f(x)(v^2)+ \frac{2}{C(n)} D^2\varphi(x)(v^2) \geq -t^q + 2t^q= t^q.
$$
\end{proof}
Since $J_y^m \varphi=0$ for $y\in C$ and each $m\in \N \cup \lbrace 0 \rbrace, $ we have proved that $H$ is of class $C^\infty ( \Rn)$, $H=f$ on $C$, $J_y^m H = J_y^m f=P_y^m$ for every $y\in C$ and every $m\in \N$, and $H$ has a strictly positive Hessian on the set $\lbrace x\in \R^n : 0<d(x,C)\leq r \rbrace.$

\subsection{Conclusion of the proof: convexity of $f+\psi$ on $\R^n$}\label{conclusionproofcinfty}
In order to complete the proof of Theorem \ref{maintheorem Cinfty} we only have to change the function $H$ slightly.
\begin{lemma}
There exists a number $a>0$ such that the function $F:=f+a\varphi$ is of class $C^\infty(\R^n)$, concides with $f$ on $C$, satisfies $J_y^m F =P_y^m$ for every $y\in C$, $m\in \N$, is convex on $\R^n$, and has a strictly positive Hessian on $\R^n \setminus C$.
\end{lemma}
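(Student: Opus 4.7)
The strategy is to take $F = f + a\varphi$ for a sufficiently large $a \geq 2/C(n)$, using the previous lemma as our starting point. Since $H = f + \tfrac{2}{C(n)}\varphi$ is already $C^\infty$, agrees with $f$ (together with all jets $P_y^m$) on $C$, and has strictly positive Hessian on $\{0 < d(x,C) \leq r\}$, and since $\varphi$ itself is convex with $J^m_y\varphi = 0$ for every $y \in C$, the regularity and jet-matching properties transfer immediately to $F$ regardless of how we choose $a \geq 2/C(n)$. The remaining task is to guarantee that $F$ is convex on all of $\R^n$ and has strictly positive Hessian everywhere off $C$.

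Next I would establish strict convexity of $\varphi$ outside $C$. Since $C$ is compact convex and $0 \in C$, the support function gives $C = \{y \in \R^n : \langle y,w\rangle \leq h(w) \text{ for all } w\in\Sn\}$. From the integral representation
\begin{equation*}
D^2\varphi(x)(v^2) = \int_{\Sn} g''\bigl(\langle x,w\rangle - h(w)\bigr)\langle w,v\rangle^2\, dw,
\end{equation*}
together with $g'' = \tilde\varepsilon$ being strictly positive on $(0,\infty)$ and zero elsewhere, I would observe that for any $x \notin C$ the open set $W_x = \{w \in \Sn : \langle x,w\rangle > h(w)\}$ is nonempty, hence of positive spherical measure, while for every $v \in \Sn$ the set $\{w : \langle w,v\rangle = 0\}$ has measure zero. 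The intersection $W_x \setminus \{\langle \cdot,v\rangle = 0\}$ then has positive measure and on it the integrand is strictly positive, yielding $D^2\varphi(x)(v^2) > 0$ for every $x \in \R^n \setminus C$ and $v \in \Sn$.

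With this in hand, let $M := \sup\{|D^2f(x)(v^2)| : x \in \R^n,\, v \in \Sn\}$, which is finite because $f$ is $C^\infty$ with support contained in the compact set $C + \overline{B}(0,2)$. The set
\begin{equation*}
K := \{x \in \R^n : d(x,C) \geq r\} \cap \overline{\sop(f)}
\end{equation*}
is compact, and the continuous map $(x,v) \mapsto D^2\varphi(x)(v^2)$ is strictly positive on the compact set $K \times \Sn$ (since $K \subset \R^n\setminus C$). Hence it attains a minimum $c > 0$ there. I then set $a := \max\bigl\{2/C(n),\, (M+1)/c\bigr\}$.

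Finally I verify that $F := f + a\varphi$ works. The properties $F \in C^\infty(\R^n)$, $F|_C = f|_C$, and $J^m_y F = P^m_y$ for every $y\in C$ and $m\in\N$ are immediate from the corresponding properties of $f$ and $\varphi$. For the Hessian I split into four regions: on $C$, $D^2F = D^2f \geq 0$ by $(CW^2)$; on $\{0 < d(x,C) \leq r\}$, writing $F = H + (a - 2/C(n))\varphi$ and using $D^2\varphi \geq 0$ with the previous lemma gives $D^2F \geq D^2H > 0$; on $K$, $D^2F(x)(v^2) \geq -M + ac \geq 1 > 0$; and on $\R^n \setminus \sop(f)$, $f$ vanishes in a neighbourhood so $D^2F = aD^2\varphi$, which is $\geq 0$ everywhere and $>0$ on $\R^n \setminus C$ by the strict convexity established above. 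Combining, $D^2F \geq 0$ throughout $\R^n$ (so $F$ is convex) with strict positivity on $\R^n \setminus C$. The main (minor) obstacle is the strict convexity of $\varphi$ off $C$; once that is in place the rest is a routine compactness argument.
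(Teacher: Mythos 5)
Your proposal is correct and follows essentially the same route as the paper: pick $a$ large by a compactness argument (an upper bound $M$ for $|D^2f|$ against a positive lower bound for $D^2\varphi$ on a compact set at distance $\geq r$ from $C$ inside the support of $f$), then verify positivity of the Hessian region by region, using Lemma \ref{convexity in an ball} near $C$ and $D^2F=aD^2\varphi$ where $f$ vanishes. The only differences are cosmetic: you justify strict positivity of $D^2\varphi$ off $C$ by a measure-theoretic argument instead of the paper's explicit cap estimate \eqref{second differential of varphi}, and you get $D^2f\geq 0$ on $C$ directly from $(CW^2)$ rather than via convexity of $f$ on $C$.
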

\begin{proof}
Let us denote $\psi=\frac{2}{C(n)}\varphi$.
We recall that $f=0$ outside $C+B(0,2)$. Take $r>0$ as in Lemma \ref{convexity in an ball}. Since $C_r:=\lbrace x \in \R^n \: : \: r\leq d(x,C) \leq 2 \rbrace$ is a compact subset where $\psi$ has a strictly positive Hessian (cf. (\ref{second differential of varphi})), and using again that $f$ has compact support, we can find $ M \geq 1$ such that
$$
\sup_{x\in\R^n, \, v\in \Sn} | D^2f(x)(v^2) | \leq M \quad \text{ and } \quad
\underset{ x\in C_r, \: v\in \Sn } {\inf} D^2\psi(x)(v^2) \geq \frac{1}{M}.
$$
Let us take $A=2M^2$ and $F=f+A\psi.$ If $d(x,C) \leq r$ and $v\in\mathbb{S}^{n-1}$ we have, by Lemma \ref{convexity in an ball}, that
$$
D^2F(x)(v^2) = 2M^2 D^2\psi(x)(v^2) + D^2f(x)(v^2) > D^2\psi(x)(v^2)+D^2f(x)(v^2) >0.
$$
In the case when $d(x,C) \in [r,2],$ given any $|v|=1,$ we easily see that
$$
D^2F(x)(v^2) = 2M^2 D^2\psi(x)(v^2) + D^2f(x)(v^2) \geq 2M -M =M>0.
$$
Finally, in the region $\lbrace x \: : \: d(x,C) > 2 \rbrace,$ we have that $f=0.$ Hence
$$
D^2F(x)(v^2) = 2M^2 D^2\psi(x)(v^2) >0.
$$
Therefore, in any case, by setting $a=2A/C(n)$, we get that $F = f+ A\psi=f+a\varphi$ is of class $C^\infty(\Rn)$, satisfies $F(y)=f(y)$ and $J_y^m F=P_y^m$ for every $y\in C$, $m\in \N$, and has a positive Hessian on $\R^n \setminus C.$ Since $f$ is convex on $C$ and $F$ is differentiable, this is easily seen to imply that $F$ is convex on all of $\R^n$.
\end{proof}

\subsection{Proof of Corollary \ref{corollary for Cm and CWk with a strict inequality}}\label{proof of corollary in case m finite}
An obvious variation of the proof of the above lemma shows Proposition \ref{if f is convex on a neighbourhood of C then the problem is easy}. On the other hand Proposition \ref{if f is convex on a neighbourhood of C then the problem is easy} can easily be used to show Corollary \ref{corollary for Cm and CWk with a strict inequality}. Indeed, we have
$$
D^2f(y)(v^2)+ t\: D^3f(y)(w,v^2)+ \cdots + \frac{t^{k-2}}{(k-2)!} D^kf(y)(w^{k-2},v^2)\geq \eta t^{k-2}
$$
for all $y\in C$, $w,v\in\mathbb{S}^{n-1}$, $0<t\leq t_0$ and, also, by Taylor's theorem and uniform continuity of $D^{m}f$,
\begin{eqnarray*}
& & D^2f(y+tw)(v^2) =\\
 & & D^2f(y)(v^2)+ t\: D^3f(y)(w,v^2)+ \cdots + \frac{t^{m-2}}{(m-2)!} D^mf(y)(w^{m-2},v^2) +R_{m}(t, y, v, w),
\end{eqnarray*}
where
$$
\lim_{t\to 0^{+}}\frac{R_{m}(t,y,v,w)}{t^{m-2}}=0 \quad \textrm{uniformly on} \quad y\in C, w, v\in \mathbb{S}^{n-1}.
$$
We may assume $t_0\leq 1$.
Then we may also find $t_0'\in (0, t_0)$ such that $R_{m}(t, y, v, w)\geq -\frac{\eta}{2}t^{m-2}$ for all $y\in C$, $w,v\in\mathbb{S}^{n-1}$, $0<t\leq t'_0$, and it follows that
$$
D^2f(y+tw)(v^2) \geq \frac{\eta}{2} t^{m-2}
$$
for all $y\in C$, $w,v\in\mathbb{S}^{n-1}$, $0<t\leq t'_0$. This implies that $D^{2}f(x)\geq 0$ whenever $d(x,C)\leq t'_0$, and therefore that $f$ is convex on $U:=\{x\in\R^n \, : \, d(x,C)<t'_0\}$. Corollary \ref{corollary for Cm and CWk with a strict inequality} then follows from Proposition \ref{if f is convex on a neighbourhood of C then the problem is easy}.
\bigskip

\section{$C^{m}$ convex extensions for $m\geq 2$ finite}\label{sectionproofcm}

We start this section with the proof of Theorem \ref{maintheoremfinitecase}. We may assume that $C$ is a compact convex subset of $\Rn$ and $f:\Rn \to \R$ is of class $C^m(\Rn),$ $m \geq n+3,$ with support contained on $C+B(0,2)$, and such that $f$ satisfies condition $(CW^m)$ on $C$. We will split the proof into several subsections.

\subsection{The function $\omega$} 
\begin{lemma} \label{functionomega}
There exists a non decreasing continuous function $\omega: [0, +\infty) \to [0,+\infty)$ with $\omega(0)=0$ and such that
$$
D^2f(x)(v^2) \geq -\omega(d(x,C)) d(x,C)^{m-2} \quad \text{for all} \quad x\in \Rn,\: v\in \Sn.
$$
\end{lemma}
\begin{proof}
Let us define
$$
Q_m(t,y,v,w):= \frac{D^2f(y)(v^2)+ \cdots+ \frac{t^{m-2}}{(m-2)!} D^mf(y)(w^{m-2},v^2)}{t^{m-2}},
$$ 
$$
\varepsilon_m(t):= \sup_{\left\lbrace z\in\R^n, \, z'\in \partial C, \: |z-z'| \leq t \right\rbrace} \| D^mf(z)-D^mf(z')\|,$$for all $t>0, y\in C, \: v,w\in \Sn.$ Condition $(CW^m)$ together with the uniform continuity of $D^mf$ imply that, for every positive integer $p,$ there exists $r_p>0$ such that
\begin{equation}\label{boundforsmalldistances}
Q_m(t,y,v,w) \geq -\frac{1}{2p} \quad \text{and} \quad \varepsilon_m(t) \leq \frac{1}{2p} 
\end{equation}
for every $y\in \partial C,\:  v,w\in \Sn$ and $ 0<t \leq r_p.$ We may suppose that the sequence $\lbrace r_p \rbrace_{p \geq 1}$ is decreasing to $0.$ Since the derivatives of $f$ up to order $m$ are bounded on $\Rn$ we can find a constant $M>1$ such that
\begin{equation}\label{boundforlargedistances}
\varepsilon_m(t) - Q_m(t,y,v,w) \leq M \quad \text{for all} \quad y\in \partial C,\: v,w \in \Sn, \: t \geq r_1.
\end{equation}

Now, given $x\in \Rn \setminus C$ and $v\in \Sn,$ we denote by $y$ the metric projection of $x$ onto $C,\:  w:=(x-y)/|x-y|$ and $t:=d(x,C).$ By Taylor's theorem and the definition of $Q_m$ and $\varepsilon_m,$ we have
$$
D^2f(x)(v^2) \geq  t^{m-2} Q_m(t,y,v,w) - t^{m-2} \varepsilon_m(t)=-t^{m-2} \left( \varepsilon_m(t)- Q_m(t,y,v,w) \right).
$$
We define $\omega : [0,+\infty) \to [0,+\infty)$ by setting
\begin{align*}
& \omega(0)=0, \quad \omega(r_p)= \frac{1}{p-1} \quad  p \geq 2, \quad \omega(r_1)=M, 
\\
&  \omega \: \text{ affine on each } \:\: [r_{p+1}, r_p] \quad p \geq 1, \quad \omega(t)=M \quad t \geq r_1.
\end{align*}
It is easy to check that $\omega$ is a non-decreasing continuous function such that $\omega (t) \geq \frac{1}{p}$ for every $t \geq r_{p+1}$ and every $p \geq 2$, and that $\omega(t) \geq 1$ for every $t \geq r_2.$ Using inequalities \eqref{boundforsmalldistances} and \eqref{boundforlargedistances} we deduce that
\begin{align*}
& D^2f(x)(v^2) \geq -Mt^{m-2} \quad \text{for} \quad t \geq r_1 \\
& D^2f(x)(v^2) \geq -\frac{1}{p}t^{m-2} \quad \text{for} \quad t \leq r_p, \quad p \in \N.
\end{align*} 
By the properties of $\omega$ we conclude
$$
D^2f(x)(v^2) \geq -\omega(d(x,C)) d(x,C)^{m-2} \quad \text{for every} \quad x\in \Rn, \: v\in \Sn.
$$

\end{proof}

\subsection{The function $\varphi$}\label{subsectionproofcm1}
Using the function $\omega$ of Lemma \ref{functionomega}, we introduce two new functions
$$
 g(t)   =  \left\lbrace 
	\begin{array}{ccl}
	\int_0^t \int_0^{t_2} \cdots \int_0^{t_{m-n-1}} \omega(2^{m-n-2}s)  ds \: dt_{m-n-1}\cdots dt_2 & \mbox{if } \: t>0 \\
	0  & \mbox{if } \: t\leq 0,
	\end{array}
	\right.
$$
$$
\varphi(x)= \int_{\Sn} g( \langle x,w \rangle-h(w) ) \: dw , \quad x\in \Rn.
$$
Since $\omega$ is continuous, the function $g$ is of class $C^{m-n-1}(\R)$ with $g^{(k)}(0)=0$ for every $1 \leq k \leq m-n-1.$ The same arguments and calculations as in Section \ref{sectioncontructionvarphi} allow us to deduce that $\varphi$ is of class $C^{m-n-1}(\Rn)$ with $\varphi^{-1}(0)=C$ and $J^{m-n-1}_y \varphi=0$ for all $y\in C.$ It is also easy to see
\begin{equation}\label{firstcmestimationhessian}
D^2\varphi (x)(v^2)  = \int_{\Sn} g'' (  \langle x,w \rangle-h(w) ) \langle v,w \rangle^2 dw \quad \text{for all} \quad x\in \Rn, \: v\in \Sn.
\end{equation}

\subsection{Conclusion of the proof of Theorem \ref{maintheoremfinitecase}}\label{subsectionproofcm2}
Let $x\in \Rn \setminus C$ be such that $t:=d(x,C) \leq 1$ and let $v\in \Sn.$ With the same calculations as in the proof of Lemma \ref{convexity in an ball} and bearing in mind that $g''$ is nondecreasing, we obtain from \eqref{firstcmestimationhessian} that
\begin{equation}\label{lastboundhessian}
D^2\varphi(x)(v^2) \geq  \frac{V(n)}{36} \left(\frac{t}{1+\diam(C)} \right)^{n+1}  g''  \left( \frac{t}{2} \right),
\end{equation}
where $V(n)>0$ is the constant given by Lemma \ref{choice of w} $(3).$ Let us now estimate $ g'' (t/2).$ By the construction of $g$ we have
$$
 g'' \left( \frac{t}{2} \right) = \int_0^{t/2} \int_0^{t_2} \cdots \int_0^{t_{m-n-3}} \omega (2^{m-n-2}s)  ds \: dt_{m-n-3}\cdots dt_2,
$$
where, in the special case $m=n+3,$ the above expression means $g'' (t/2) = \omega (t).$ Using that $\omega$ is nonnegative and nondecreasing we can write
\begin{align*}
g'' \left( \frac{t}{2} \right) &\geq  \int_{t/4}^{t/2} \int_0^{t_2} \cdots \int_0^{t_{m-n-3}} \omega (2^{m-n-2}s)  ds \: dt_{m-n-3}\cdots dt_2 \\
& \geq \frac{t}{4} \int_0^{t/4}  \int_0^{t_2} \cdots \int_0^{t_{m-n-4}} \omega (2^{m-n-2}s)  ds \: dt_{m-n-4}\cdots dt_2 \\
& \geq \frac{t}{4}\cdot\frac{t}{8} \int_0^{t/8}  \int_0^{t_2} \cdots \int_0^{t_{m-n-5}} \omega (2^{m-n-2}s)  ds \: dt_{m-n-5}\cdots dt_2 \\
& \geq \frac{t}{4} \cdot \frac{t}{8} \cdots \frac{t}{2^{m-n-3}} \cdot \frac{t}{2^{m-n-2}} \omega (t)= \frac{t^{m-n-3}}{2^{2+3+\cdots+(m-n-2)}} \omega (t).
\end{align*}
By plugging this estimation in \eqref{lastboundhessian}, we obtain
$$
D^2 \varphi(x)(v^2) \geq k(n,m,C) t^{m-2} \omega (t), \quad \text{where}
$$
$$
k(n,m,C):= \frac{V(n)}{36 \cdot 2^{2+3+\cdots+(m-n-2)} (1+\diam(C))^{n+1} }.
$$
On the other hand, Lemma \ref{functionomega} implies that
$$
D^2f(x)(v^2) \geq -\omega(t)t^{m-2}.
$$
Therefore, the function $\psi= f+ \frac{2}{k(n,m,C)} \varphi$ satisfies $D^2 \psi(x)(v^2) \geq 0$ on the neighbourhood $\lbrace x\in \Rn \: : \: d(x,C) \leq 1 \rbrace$ of $C$, with a strict inequality whenever $ 0<d(x,C) \leq 1.$ We also have that the function $\psi$ is of class $C^{m-n-1}(\Rn)$, with $f=\psi$ on $C$ and $J_y^{m-n-1} \psi = J_y^{m-n-1} f$ for all $y\in C.$ Finally, using the same argument as in Section \ref{conclusionproofcinfty}, we can construct a convex function $F\in C^{m-n-1}(\Rn)$ with $F=f$ on $C$ and $J^{m-n-1}_y F = J_y^{m-n-1}f$ for all $y\in C.$ The proof of Theorem \ref{maintheoremfinitecase} is now complete. \qed

\medskip

In the rest of the section we will give the proof of Theorem \ref{theoremspecialcase}.

\subsection{Sublevel sets of strongly convex functions} \label{subsectionsublevelsets}
Here we gather some elementary properties of ovaloids that we will need in the proof of Theorem \ref{theoremspecialcase}.
\begin{proposition} \label{stronglyconvexfunction}
Suppose that $\psi : \Rn \to \R$ is a convex function of class $C^m(\Rn),$ with $m\geq 2,$ such that there exists a constant $M>0$ with $D^2\psi(x)(v^2) \geq M$ for all $x\in\Rn$ and for all $v\in \Sn.$ If we denote $C=\psi^{-1}(-\infty,1],$ then the following is true.
\begin{enumerate}
\item[$(1)$] $C$ is a convex compact set, $\partial C= \lbrace x\in \Rn \: : \: \psi(x)=1 \rbrace$ and $\interior(C)=\lbrace x\in \Rn \: : \: \psi(x)<1 \rbrace.$
\item[$(2)$] If $\interior(C)=\emptyset,$ then $C$ is a singleton.
\end{enumerate}
If we further assume that $\interior(C) \neq \emptyset$ then we also have: 
\begin{enumerate}
\item[$(3)$] $\nabla \psi$ does not vanish on $\partial C$ and $\partial C$ is a one-codimensional manifold of class $C^m.$ 
\item[$(4)$] If $x\notin C$ and $x_C \in \partial C$ is such that $|x-x_C| = d(x,C),$ then $\nabla \psi(x_C)$ and $x-x_C$ are paralell and outwardly normal to $\partial C$ at the point $x_C$. 
\item[$(5)$] There is a constant $\beta >0$ such that
$$
\psi(x)-1 \geq \beta d(x,C) \quad \text{for every} \quad x\in \Rn \setminus C.
$$
\end{enumerate}
\end{proposition}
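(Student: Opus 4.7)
The plan is to extract every item from a single \emph{master inequality}: by Taylor's theorem with integral remainder, the uniform bound $D^2\psi\geq M$ yields
$$\psi(y) \geq \psi(x) + \langle \nabla\psi(x),\, y-x\rangle + \tfrac{M}{2}|y-x|^2 \qquad \text{for all } x,y\in\Rn.$$
Setting $x=0$ exhibits $\psi$ as quadratically coercive, so the sublevel set $C=\psi^{-1}(-\infty,1]$ is bounded; it is closed by continuity and convex as the sublevel set of a convex function, which proves (i). For (iii) I would argue by contraposition: if $C$ contained two distinct points $x_0,x_1$, then applying the master inequality to the pairs $(z,x_0)$ and $(z,x_1)$ with $z=(x_0+x_1)/2$ and adding cancels the linear terms and yields $\psi(z)\leq 1-\frac{M}{8}|x_0-x_1|^2<1$, whereupon continuity produces an open ball around $z$ on which $\psi<1$, contradicting empty interior.

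For (ii), the inclusion $\partial C\subseteq\{\psi=1\}$ follows from continuity, since a point with $\psi<1$ has a neighborhood contained in $\interior(C)$. For the reverse inclusion, if $\interior(C)=\emptyset$ then by (iii) $C=\{x_0\}$ and $\psi(x_0)=1$ (otherwise continuity again produces interior points), while if $\interior(C)\neq\emptyset$, any $y\in\partial C$ satisfies $\psi(y)\leq 1$, and $\psi(y)<1$ would drag $y$ into the interior. Now assume $\interior(C)\neq\emptyset$. Coercivity together with strict convexity (another consequence of the master inequality) gives $\psi$ a unique global minimum $x^{*}$; since there is some $z$ with $\psi(z)<1$, we have $\psi(x^{*})<1$, so $x^{*}\in\interior(C)$, proving (iv). Strict convexity forces $\nabla\psi$ to vanish only at $x^{*}$, so $\nabla\psi\neq 0$ on $\partial C=\psi^{-1}(1)$, and the regular value theorem (available since $\psi\in C^{m}$) yields (v).

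For (vi) I would invoke the variational characterization of the metric projection onto a closed convex set, $\langle x-x_C,\, z-x_C\rangle\leq 0$ for every $z\in C$, which places $x-x_C$ in the outward normal cone to $C$ at $x_C$; on the other hand convexity of $\psi$ gives $\langle \nabla\psi(x_C),\, z-x_C\rangle\leq \psi(z)-1\leq 0$ for $z\in C$, so $\nabla\psi(x_C)$ lies in the same normal cone. Since (v) makes $\partial C$ a smooth hypersurface at $x_C$, this normal cone is one-dimensional, forcing $x-x_C$ and $\nabla\psi(x_C)$ to be parallel and, being both outward-pointing, positive multiples of each other. Finally, (vii) falls out of (vi): compactness of $\partial C$ with $\nabla\psi\neq 0$ continuously yields $\beta:=\min_{\partial C}|\nabla\psi|>0$, and plugging $(x_C,x)$ into the master inequality and discarding the quadratic term gives
$$\psi(x)-1 \;\geq\; \langle \nabla\psi(x_C),\, x-x_C\rangle \;=\; |\nabla\psi(x_C)|\cdot|x-x_C| \;\geq\; \beta\, d(x,C).$$

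The step I expect to be most delicate is the sign bookkeeping in (vi): one must verify that $\nabla\psi(x_C)$ and $x-x_C$ point in the \emph{same} outward direction (not merely that they are parallel), because the final estimate in (vii) converts their inner product into the full product of norms rather than just a nonnegative quantity. Everything else is routine convex analysis, an application of the implicit function theorem, or a direct algebraic consequence of the master inequality.
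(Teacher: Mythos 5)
Your proposal is correct and, for item (vii) --- the only item the paper actually proves, items (i)--(vi) being dismissed there as well-known facts about strongly convex functions --- it follows the paper's argument essentially verbatim: $\beta:=\min_{\partial C}|\nabla\psi|>0$ by compactness of $\partial C$ and nonvanishing of the gradient there, and then the convexity (gradient) inequality at $x_C$ combined with (vi) yields $\psi(x)-1\geq\langle\nabla\psi(x_C),x-x_C\rangle=|\nabla\psi(x_C)|\,|x-x_C|\geq\beta\,d(x,C)$. One small slip worth fixing: in (ii), in the case $\interior(C)\neq\emptyset$, the sentence you give for the reverse inclusion $\{\psi=1\}\subseteq\partial C$ actually restates the forward inclusion; the correct argument is that a point $y\in\interior(C)$ with $\psi(y)=1$ would be a local maximum of $\psi$ on a ball contained in $C$, which contradicts strong convexity by the same midpoint inequality you already used for (iii).
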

\begin{proof}
Properties $(1)-(4)$ are well-known facts about strongly convex functions of class $C^m$. Perhaps only property $(5)$ requires an explanation. The compactness of $\partial C$ together with $(3)$ implies that $\inf_{\partial C} |\nabla \psi| \geq \beta$ for some $\beta >0.$ If $x\notin C,$ and $x_C \in \partial C$ is such that $|x-x_C| = d(x,C),$ the convexity of $\psi$ yields
$$
\psi(x)-1= \psi(x)-\psi(x_C) \geq \langle \nabla \psi(x_C),x-x_C \rangle.
$$
According to $(4)$, the last term coincides with $ |\nabla \psi(x_C)| |x-x_C| \geq \beta d(x,C).
$
\end{proof}

By using well-known properties of the Minkowski functional of a convex set, one can easily show the following.
\begin{proposition} \label{distanceintersection}
If $C= \bigcap_{k=1}^NC_k,$ where each $C_k$ is a convex and bounded subset of $\Rn$ with $\interior(C) \neq \emptyset$ we have
$$
 \max_{1\leq k \leq N} d(x,C_k) \leq d(x,C) \leq \frac{R}{r} \max_{1\leq k \leq N} d(x,C_k) \quad \text{for all} \quad x\in \Rn,
$$
where $r,R >0$ are such that $\overline{B}(x_0,r) \subseteq C \subseteq \overline{B}(x_0,R)$ and $x_0 \in \interior(C).$
\end{proposition}

\medskip

\subsection{Proof of Theorem \ref{theoremspecialcase}}\label{conclusionproofovaloids}
If a set $C$ is $(FIO^m),$ (see Definition \ref{definitionscm3}), then either $C$ has nonempty interior or $C$ is a single point. In the case that $C$ is a singleton, say $C=\lbrace y_0 \rbrace,$ we can easily modify Sections \ref{subsectionproofcm1} and \ref{subsectionproofcm2} so as to construct a $C^m(\R^n)$ convex function $F$ such that $J^m_{y_0} F= P_{y_0},$ provided that the polynomial $P_{y_0}$ satisfies condition $(CW^m).$ We may thus suppose that $C$ has nonempty interior.

\medskip

Fix $m\in \N$ with $m \geq 3.$ Suppose that $C$ is $(FIO^{m-1})$ with nonempty interior and let $f\in C^m(\Rn)$ be a function satisfying $(CW^m)$ on $C.$ According to Definition \ref{definitionscm3}, we can write $C=\bigcap_{j=1}^N C_j$ so that, for each $1\leq j \leq N,$ there exist a number $ M_j >0$ and a function $\psi_j:\Rn \to \R$ of class $C^{m-1}(\Rn)$ such that $C_j=\psi_j^{-1}(-\infty,1]$ and $D^2\psi_j(x)(v^2) \geq M_j$ for all $x\in \Rn$ and $v\in \Sn.$ Let us denote $M= \min\lbrace M_j \: : \: 1 \leq j \leq N \rbrace.$ By Proposition \ref{stronglyconvexfunction}, for each $j\in \lbrace 1, \ldots, N \rbrace,$ the set $C_j$ is a convex compactum and there is a constant $\beta_j >0$ with $\psi_j(x)-1 \geq \beta_j d(x,C_j)$ whenever $x\notin C_j.$ Set $\beta = \min \lbrace \beta_j \: : \: 1 \leq j \leq N \rbrace.$ Using Proposition \ref{distanceintersection}, we obtain some $L>0$ such that $d(x,C) \leq L \max_{1\leq j \leq N} d(x,C_j)$ for all $x\in \Rn.$ In conclusion, we have found positive constants $L, \beta ,M$ satisfying
\begin{equation} \label{inequalitiesconstantsL}
d(x,C) \leq L \: \max_{1\leq j \leq N} d(x,C_j) \quad \text{for all} \quad x\in\R^n;
\end{equation}
\begin{equation} \label{inequalitiesconstantsbeta}
\psi_j(x)-1 \geq \beta d(x,C_j) \quad \text{for all} \quad x\notin C_j, \quad 1\leq j \leq N ;
\end{equation}
\begin{equation} \label{inequalitiesconstantsM}
D^2 \psi_j (x)(v^2) \geq M  \quad \text{for all} \quad x\in \Rn,\:v\in \Sn ,\quad 1\leq j \leq N.
\end{equation}
Let $\omega:[0,+\infty) \to [0,+\infty)$ be as in Lemma \ref{functionomega} and define the functions
$$
 g(t)  =  \left\lbrace 
	\begin{array}{ccl}
	\int_0^t \int_0^{t_2} \cdots \int_0^{t_{m-1}} \omega \left( 2^{m-2} s \right) ds \: dt_{m-1} \cdots dt_2 & \mbox{if } \: t>0 \\
	0  & \mbox{if } \:  t\leq 0,
	\end{array}
	\right.
$$
$$
h(t) = g (  L\beta^{-1} t ), \quad t\in \R,
$$
$$
\varphi(x)= \sum_{j=1}^N h ( \psi_j(x)-1), \quad x\in \Rn.
$$
It is clear that $g\in C^{m-1}(\R)$ with $g^{(k)}(0)=0$ for all $0 \leq k \leq m-1.$ By the definition of the $\psi_j$'s and $g,$ we have that $\varphi^{-1}(0)=C$ and $\varphi \in C^{m-1}(\Rn).$ It is routine to check that $\partial^\alpha \varphi (y)=0$ for all $y\in C$ and $|\alpha| \leq m-1,$ and therefore $J_y^{m-1} \varphi=0$ for all $y\in C.$ Now consider a point $x\in \R^n \setminus C$ and a direction $v\in \Sn.$ A simple calculation and the fact that $h'' \geq 0$ lead us to 
\begin{align*}
D^2\varphi (x)(v^2) & = \sum_{j=1}^N  h''(\psi_j(x)-1) \langle \nabla \psi_j(x),v \rangle ^2 + \sum_{j=1}^N  h'(\psi_j(x)-1) D^2 \psi_j(x)(v^2) \\
& \geq  \sum_{j=1}^N h'( \psi_j(x)-1)D^2 \psi_j(x)(v^2).
\end{align*}
We deduce from \eqref{inequalitiesconstantsM} that
$$
D^2\varphi (x)(v^2) \geq  \sum_{j=1}^N h'(\psi_j(x)-1) D^2 \psi_j(x)(v^2) \geq M \sum_{j=1}^N h'(\psi_j(x)-1).
$$
Now we consider an index $j:=j_x$ such that $d(x,C_j)= \max_{1\leq i \leq N} d(x,C_i).$ Observe that $x\notin C_j$ as $x \in \Rn \setminus C.$ This implies that $\psi_j(x)>1$ and therefore 
\begin{equation}\label{hessianvarphiwithindexj}
D^2\varphi (x)(v^2) \geq M h'(\psi_j(x)-1)= M L \beta^{-1} g' ( L \beta^{-1} (\psi_j(x)-1)).
\end{equation}
Using the inequalities \eqref{inequalitiesconstantsbeta} and \eqref{inequalitiesconstantsL} and the choice of $j,$ we obtain
$$
\psi_j(x)-1 \geq \beta d(x,C_j) \geq \beta L^{-1} d(x,C),
$$
which in turn implies
$$
g' ( L \beta^{-1} (\psi_j(x)-1)) \geq g ' (d(x,C)) =g'(t), \quad \text{where} \quad t:=d(x,C).
$$
Because $\omega$ is nonnegative and nondecreasing, we can easily write
\begin{align*}
g'(t) & \geq \int_{t/2}^t  \int_0^{t_2} \cdots \int_0^{t_{m-2}} \omega (2^{m-2}s)  ds \: dt_{m-2}\cdots dt_2 \\
& \geq \frac{t}{2}\int_0^{t/2} \int_0^{t_2}\cdots \int_0^{t_{m-3}} \omega (2^{m-2}s)  ds \: dt_{m-3}\cdots dt_2 \\
& \geq \frac{t}{2} \cdot \frac{t}{4} \cdots \frac{t}{2^{m-3}} \cdot \frac{t}{2^{m-2}} \omega (t)= \frac{t^{m-2}}{2^{1+2+3+\cdots+(m-2)}} \omega (t).
\end{align*} 
By plugging this estimation in \eqref{hessianvarphiwithindexj} we get
$$
D^2\varphi(x)(v^2) \geq  M L \beta^{-1} g'(t)= k(n,m,C) t^{m-2} \omega(t), \quad \text{where} \quad
k(n,m,C) =\frac{M L \beta^{-1}}{2^{1+2+3+\cdots+(m-2)}}.
$$
On the other hand, since $f:\Rn \rightarrow \R$ satisfies the condition $(CW^m)$ on $C,$ then the inequality of Lemma \ref{functionomega} holds for $f$ and thus
$$
D^2f(x)(v^2) \geq - \omega (t) t^{m-2}.
$$
Therefore $F:=f+\frac{2}{k(n,m,C)} \varphi$ has a strictly positive Hessian on $\Rn \setminus C,$ is of class $C^{m-1}(\Rn)$ and coincides with $f$ on $C.$ Moreover, since $J_y^{m-1} \varphi=0$ for all $y\in C,$ we have that $J_y^{m-1} F = J_y^{m-1}f$ for all $y\in C.$ Because $f$ is convex on $C$ and the extension $F$ is differentiable, we have that $F$ is convex on $\Rn.$ The proof of Theorem \ref{theoremspecialcase} is complete.

\bigskip

\section{Remarks and Counterexamples}\label{sectioncounterexamples}

The following example is a variation of \cite[Example 4]{SchulzSchwartz} and shows that our main result fails if we drop the assumption that $C$ be compact, even in the presence of strictly positive Hessians.
\begin{example}\label{modified bathtub}
{\em Let $C=\{(x,y)\in\R^2 \, : \,  x>0, \: xy\geq 1\}$ and define $$f(x,y)=-2\sqrt{xy} +\frac{1}{x+1} +\frac{1}{y+1}$$
for every $(x,y)\in C$. The set $C$ is convex and closed, with a nonempty interior, and it is routine to verify that $f$ has a strictly positive Hessian on $C$. We also have
$$
\nabla f(x,y)=\left( -x^{-\frac{1}{2}}y^{\frac{1}{2}}-\frac{1}{(x+1)^{2}} \, , \,\,  -x^{\frac{1}{2}}y^{-\frac{1}{2}}-\frac{1}{(y+1)^{2}}\right), \quad (x,y) \in C.
$$
We claim that $f$ does not have any convex extension to all of $\R^2$. In order to prove this it is sufficient to see that, for instance, $m(f)(-1,-1)=\infty$, where $m(f)$ is the minimal convex extension of $f$ defined in \eqref{definition of the minimal extension}. Considering the curve $\gamma(t)=(t, \frac{1}{t})$, $t>0$, which parameterizes the boundary of $C$, we have
$$
m(f)(-1,-1)\geq f(t, \tfrac{1}{t})+\langle \nabla f(t, \tfrac{1}{t}), (-1-t, -1-\tfrac{1}{t})\rangle = 2+t+\tfrac{1}{t},
$$
so by letting either $t\to\infty$ or $t\to 0^{+}$ we obtain $m(f)(-1,1)=\infty$. As a matter of fact, it is not difficult to see that $m(f)(x,y)=\infty$ for every $(x,y)\in\R^2$ such that $x<0$ or $y<0$.}
\end{example}

The following example shows that if $C$ has empty interior then one cannot expect to find smooth convex extensions (of functions satisfying $(W^m)$ and $(CW^m)$ on $C$) without experiencing a certain loss of differentiability. The example also shows that in $\R^2$ this loss amounts to at least two orders of smoothness, and that the situation does not improve as $m$ grows large (unless $m=\infty$, of course).

\begin{example}\label{CWm with m finite is not sufficient when intC is empty}
{\em Consider the function $\theta(y)= \frac{1-\cos(2\pi y)}{2\pi}, \: y\in \R.$ Clearly, $\theta \in C^\infty(\R)$, with $\theta(0)=\theta(1)=0, \: \theta(1/2)= \frac{1}{\pi}$ and $\theta'(y)= \sin (2\pi y).$ Define $h(x,y)=\theta(y) x^m, \: (x,y) \in \R^2.$ Let $C:= \lbrace 0 \rbrace \times [0,1].$ We have $D^k h=0$ on $C$ for all $k\in \lbrace 0, \ldots,m-1 \rbrace$, and
$$
D^m h(x,y)=m! \theta(y) \overbrace{e_1^* \otimes \cdots \otimes e_1^*}^{m} \textrm{ for }  (x,y) \in C
$$
(here $e_1^*$ denotes the linear function $(x_1, x_2)\mapsto x_1$).
Therefore $D^m h(0,0)=D^m h (0,1) = 0$, and
$
D^m h (0, \tfrac{1}{2} ) = \frac{m!}{\pi } e_1^* \otimes \cdots\otimes e_1^*
$.
We claim that if $m\geq 2$ is even, then there is no convex function $F\in C^m(\R^2)$ such that $D^k F = D^k h$ on $C$ for $k\in \lbrace 0, \ldots,m  \rbrace $. We also claim that $h$ satisfies conditions $(W^{\infty})$ and $(CW^{m+1})$ (and in particular $(CW^m)$ too) on $C$.}
\end{example}
The first claim immediately follows from the following.
\begin{remark} \label{lema no existencia}
{\em If $m \geq 2$, there exists no convex function $f\in C^m (\R^2)$ such that $D^k f(0,y)=0$ for all $k\in \lbrace 0, \ldots, m-1 \rbrace$, $y\in [0,1]$, and such that $D^m f(0,0) =  D^mf(0,1)=0$ and $D^m f(0, \frac{1}{2} ) = A e_1^* \otimes \cdots \otimes e_1^*$, where $A>0$ is a constant.}
\end{remark}
\begin{proof}
For the sake of contradiction, suppose there is such an $f$. Using Taylor's theorem we have
$$
f(x,y)= \frac{1}{m!} D^mf (0,y_0) ( x, y-y_0)^m + R(x,y,y_0) \quad (x,y) \in \R^2, \: y_0 \in [0,1],
$$
 where $\dfrac{R(x,y)}{|(x,y-y_0)|^m} \to 0$ as $(x,y) \to (0,y_0)$, uniformly on $y_0 \in [0,1].$ Fix $0 < \varepsilon < \frac{A}{2 m!}$, and take $\delta=\delta(\varepsilon) >0$ such that if $y_0 \in [0,1]$ and $(x,y) \in \R^2$ satisfy $ ( x^2 +(y-y_0)^2 )^{1/2} \leq \delta$ then
$$
\big| f(x,y)-\frac{1}{m!} D^mf (0,y_0) ( x, y-y_0)^m \big| =  |R(x,y)| \leq \varepsilon ( x^2 +(y-y_0)^2 )^{\frac{m}{2}}.
$$
Evaluating for $y=y_0=1/2$ we obtain
$$
\Big| f(x,\tfrac{1}{2})- A \frac{x^m}{m!} \Big| \leq \varepsilon |x|^m \quad \textrm{whenever} \quad |x| \leq \delta.
$$
For $y=y_0 \in \lbrace 0,1 \rbrace $ and $|x| \leq \delta$ we get
$$
\max \lbrace |f(x,0)|, |f(x,1)| \rbrace \leq \varepsilon |x|^m.
$$
Fix $x_0 >0$ with $x_0 \leq \delta.$  We then have
$$
f(x_0, \tfrac{1}{2}) \geq A \frac{ x_0^m}{m!}- \varepsilon x_0^m > 2\varepsilon x_0^m -\varepsilon x_0^m= \varepsilon x_0^m \geq \max \lbrace f(x_0,0), f(x_0,1) \rbrace.
$$
This implies that $[0,1] \ni t \mapsto \varphi(t) = f(x_0,t)$ satisfies $\varphi(\tfrac{1}{2}) > \frac{1}{2} \varphi(0) +\frac{1}{2} \varphi(1),$ and in particular $f$ cannot be convex.
\end{proof}

Let us now prove our second claim. It is obvious that $h$ satisfies $(W^{k})$ for every $k$. We only have to check that $h$ satisfies $(CW^{m+1})$ on $C$. We must see that, given $\varepsilon >0,$ there exists $t_\varepsilon>0$ such that
\begin{align*}
Q_{m+1}(y,t,v,w):= \frac{\frac{1}{(m-2)!} D^m h(0,y)(v^2 ,w^{m-2}) +\frac{t}{(m-1)!}D^{m+1}h(0,y)(v^2,w^{m-1})  }{t} \geq -\varepsilon,
\end{align*}
for every $y\in [0,1], \: v,w\in \mathbb{S}^{1}, \: 0< t \leq t_\varepsilon.$
It is not difficult to check that
\begin{align*}
D^{m+1}h(0,y)(v^2,w^{m-1}) & = \frac{\partial^{m+1} h}{\partial x^m \partial y}(0,y) \left[ (m-1) v_1^2 w_1^{m-2} w_2 + 2 v_1v_2 w_1^{m-1} \right] \\
& = m! \: \theta'(y) \left[ (m-1) v_1^2 w_1^{m-2} v_2 + 2 v_1v_2 w_1^{m-1} \right].
\end{align*}
On the other hand $D^m h(0,y)(v^2 ,w^{m-2})= m! \: \theta(y) v_1^2 w_1^{m-2}$. Let us fix $t_{\varepsilon}$ such that
$$
0<t_{\varepsilon}\leq \min\left( 1, \, \frac{\varepsilon}{4\pi (2m+3)(m+1)m(m-1)}\right) .
$$
Take $y\in [0,1], \: v,w\in \mathbb{S}^{1}$ and $0< t \leq t_\varepsilon.$ We have
$$
Q_{m+1}(y,t,v,w)= \frac{1}{t} \left[ \frac{m!}{(m-2)!} \theta(y)  v_1^2 w_1^{m-2} + \frac{m!}{(m-1)!}  t \theta'(y) \left( (m-1) v_1^2 w_1^{m-2} w_2 + 2 v_1v_2 w_1^{m-1} \right) \right].
$$
Since $m$ is even, we have $w_1^{m-2} \geq 0$, and it is easily seen that
\begin{align}\label{desigualdad principal}
& Q_{m+1}(y,t,v,w)  \geq  \frac{m(m-1)|v_1||w_1|^{m-2}}{t} \left( \theta(y) |v_1| -(m+1) t |\theta'(y)| \right).
\end{align}
Let us now distinguish the following cases.

\noindent\textbf{{Case 1.}} Assume $y\in [\frac{1}{4}, \frac{3}{4} ].$ Then $\cos(2\pi y) \leq 0 ,$ which implies $\theta(y) \geq \frac{1}{2\pi}.$ Since we always have $|\theta '(y)| = | \sin(2\pi y) | \leq 1$, it follows from (\ref{desigualdad principal}) that
$$
Q_{m+1}(y,t,v,w)\geq \frac{m(m-1)|v_1||w_1|^{m-2}}{t} \left( \frac{|v_1|}{2\pi} -(m+1) t \right).
$$
\noindent \textbf{{Subcase 1.1.}} Assume $ |v_1| \geq 2\pi (m+1)t.$ Then it is clear that $Q_{m+1}(y,t,v,w) \geq 0 \geq -\varepsilon.$

\noindent \textbf{{Subcase 1.2.}} Assume $ 2\pi (m+1) t^2 \leq |v_1| \leq 2\pi (m+1)t.$ Then, since $|w_1|, t , 1-t \leq 1$, we obtain
\begin{align*}
&Q_{m+1}(y,t,v,w) \geq \frac{m(m-1)|v_1||w_1|^{m-2}}{t} \left( (m+1) t^2 -(m+1) t \right)\\
&  = (m+1)m (m-1) |v_1| | w_1|^{m-2} (t-1) \geq - 2\pi (m+1)^2m(m-1)t |w_1|^{m-2} (1-t) \\
& \geq - 2\pi t (m+1)^2 m(m-1) \geq - 2\pi t_\varepsilon (m+1)^2 m(m-1) \geq -\varepsilon.
\end{align*}
\noindent\textbf{{Subcase 1.3.}} Assume $ |v_1| \leq 2\pi (m+1)t^2.$ We have
\begin{align*}
&Q_{m+1}(y,t,v,w)  \geq -\frac{m(m-1)|v_1||w_1|^{m-2}}{t} \left( (m+1) t -\frac{|v_1|}{2\pi} \right)\\
&  \geq -\frac{2\pi m (m-1) (m+1)t^2 | w_1|^{m-2}}{t} \left( m+1+ \frac{1}{2\pi} \right)\geq - 2\pi (m+1)m(m-1) (m+2) \:t \\
& \geq - 2\pi (m+1)m(m-1) (m+2) \:t_\varepsilon \geq -\varepsilon.
\end{align*}
\noindent\textbf{{Case 2.}} Assume $y\in [0, \frac{1}{4} ).$ Then  $2\pi y \in [0,\frac{\pi}{2})$, and we have
$$
\theta(y) = \frac{1-\cos(2\pi y)}{2\pi} = \frac{\sin^2(\pi y)}{\pi}.
$$
On the other hand,
$$
|\theta'(y)| = |\sin(2\pi y)| = \sin (2\pi y) = 2 \sin (\pi y) \cos (\pi y).
$$
By plugging this identity in (\ref{desigualdad principal}), we get
\begin{align*}
Q_{m+1}(y,t,v,w) &\geq \frac{m(m-1) |v_1||w_1|^{m-2}}{t} \left( \frac{\sin^2(\pi y)|v_1|}{\pi}- (m+1) \sin (2\pi y) t \right) \\
& = \frac{m(m-1) |v_1||w_1|^{m-2} \sin (\pi y)}{t} \left( \frac{\sin(\pi y)|v_1|}{\pi}- 2(m+1) \cos (\pi y) t \right)
\end{align*}
\noindent\textbf{{Subcase 2.1.}} Assume $ \sin(\pi y)|v_1| \geq 2\pi (m+1) \cos (\pi y) t.$ Then obviously $Q_{m+1}(y,t,v,w) \geq 0 \geq -\varepsilon.$

\noindent\textbf{{Subcase 2.2.}} Assume $2\pi (m+1) \cos (\pi y) t^2 \leq  \sin(\pi y)|v_1| \leq 2\pi (m+1) \cos (\pi y) t.$ We have
$$
Q_{m+1}(y,t,v,w)  \geq m (m-1) |v_1| |w_1|^{m-2} \sin ( \pi y) 2 (m+1) \cos ( \pi y) (t-1),
$$
whose absolute value is smaller than or equal to
\begin{align*}
&2(m+1)m(m-1) |v_1| \sin (\pi y)  \leq 4 \pi (m+1)^2 m(m-1)  \cos(\pi y) t \\
& \leq 4 \pi (m+1)^2 m(m-1)t \leq 4 \pi (m+1)^2 m(m-1)t_\varepsilon \leq \varepsilon.
\end{align*}
This shows that $Q_{m+1}(y,t,v,w) \geq -\varepsilon.$

\noindent\textbf{{Subcase 2.3.}} Assume $\sin(\pi y)|v_1|  \leq 2\pi (m+1) \cos (\pi y) t^2.$ Recall that
$$
Q_{m+1}(y,t,v,w) \geq \frac{m(m-1) |v_1||w_1|^{m-2} \sin (\pi y)}{t} \left( \frac{\sin(\pi y)|v_1|}{\pi}- 2(m+1) \cos (\pi y) t \right).
$$
The modulus of the last term is less than or equal to
\begin{align*}
& \frac{m(m-1)|v_1| \sin (\pi y) \left( \frac{1}{\pi} + 2(m+1) \right)}{t} \leq \frac{m(m-1) 2\pi (m+1) \cos(\pi y) t^2(1 + 2(m+1))}{t} \\
& \leq 2\pi (m+1) m(m-1)(2m+3) t \leq 2\pi(2m+3) (m+1) m(m-1) t_\varepsilon \leq \varepsilon.
\end{align*}
Hence $Q_{m+1}(y,t,v,w) \geq -\varepsilon.$

\noindent\textbf{{Case 3.}} Assume finally that $y\in (\frac{3}{4}, 1].$ Take $z=1-y.$ Clearly $\cos(2\pi z) = \cos(2\pi y)$, and $\sin(2\pi z)= -\sin(2\pi y).$ Therefore $\theta(z)=\theta(y)$ and $|\theta'(y)|=|\theta'(z)|$, hence
\begin{align*}
Q_{m+1}(y,t,v,w) &\geq \frac{m(m-1)|v_1||w_1|^{m-2}}{t} \left( \theta(y) |v_1| -(m+1) t |\theta'(y)| \right) \\
& = \frac{m(m-1)|v_1||w_1|^{m-2}}{t} \left( \theta(z) |v_1| -(m+1) t |\theta'(z)| \right),
\end{align*}
and since $z\in [0,\frac{1}{4}),$ we can apply Case 2 with $z$ instead of $y$ to obtain $Q_{m+1}(y,t,v,w) \geq -\varepsilon.$

\medskip


\end{document}